\documentclass[11pt]{amsart}
\usepackage{stmaryrd}
\usepackage{amsmath}
\usepackage{amsfonts}
\usepackage{amsthm}
\usepackage{txfonts}
\usepackage{hyperref}
\usepackage{graphicx,amssymb,amsfonts,amsmath,amscd}
\usepackage{    textcomp}
\usepackage[all,ps,cmtip]{xy}
\usepackage{amscd}
\usepackage{xspace}
\usepackage{mathrsfs}
\usepackage{ dsfont }

\setlength{\parindent}{.4 in}
\setlength{\textwidth}{6.5 in}
\setlength{\topmargin} {-.3 in}
\setlength{\evensidemargin}{0 in}
\setlength{\oddsidemargin}{0 in}
\setlength{\footskip}{.3 in}
\setlength{\headheight}{.3 in}
\setlength{\textheight}{8.8 in}
\setlength{\parskip}{.09 in}

\newtheorem{thm}{Theorem}[section]

\newtheorem{lemma}[thm]{Lemma}
\newtheorem{cor}[thm]{Corollary}

\newtheorem{def-prop}[thm]{Definition-Proposition}
\newtheorem{prop-def}[thm]{Proposition-Definition}


\theoremstyle{definition} 

\newtheorem{rmk}[thm]{Remark}
\newtheorem{rmks}[thm]{Remarks}
\newtheorem{expl}[thm]{Example}

\newcommand{\C}{{\bf C}}

\newcommand{\Z}{{\bf Z}}
\newcommand{\N}{{\bf N}}



\newcommand{\sL}{{\mathscr L}}

\newcommand{\sO}{{\mathscr O}}


\newcommand{\ie}{\textit {i.e.}~}
\newcommand{\cf}{\textit {cf.}~}

\newcommand{\etc}{\textit {etc.}~}
\newcommand{\resp}{\textit {resp.}~}

\newcommand{\Aut}{\mathop{\rm Aut}\nolimits} 
\newcommand{\dd}{\mathrm{d}} 
\newcommand{\dual}{\mathop{^\vee}\nolimits} 
\newcommand{\Gr}{\mathop{\rm Gr}\nolimits} 

\newcommand{\id}{\mathop{\rm id}\nolimits} 
\renewcommand{\P}{\mathop{\bf P}\nolimits} 
\newcommand{\Pic}{\mathop{\rm Pic}\nolimits} 


\newcommand{\Res}{\mathop{\rm Res}\nolimits} 

\newcommand{\Sym}{\mathop{\rm Sym}\nolimits} 
\newcommand{\vide}{{\varnothing}} 



\newcommand{\PGL}{\mathop{\rm PGL}\nolimits}




\renewcommand{\bar}{\overline}

\newcommand{\lra}{\xrightarrow}

\newcommand{\cart}{\ar@{}[dr]|\square} 
\renewcommand{\dual}{^{\vee}} 
\newcommand{\isom}{\simeq} 



\newcommand{\diag }{\mathop{\rm {diag}}\nolimits}
\newcommand{\Span }{\mathop{\rm {Span}}\nolimits}

\begin{document}

\title{Classification of polarized symplectic automorphisms of Fano varieties of cubic fourfolds}
\author{Lie Fu}
\address{D\'epartement de Math\'ematiques et Applications, \'Ecole Normale Sup\'erieure, 45 Rue d'Ulm, 75230 Paris Cedex 05, France}
\email{lie.fu@ens.fr}
\begin{abstract}
We classify the polarized symplectic automorphisms of Fano varieties
of smooth cubic fourfolds (equipped with the Pl\"ucker
polarization) and study the fixed loci.
\end{abstract}
\maketitle

\setcounter{section}{-1}
\section{Introduction}\label{sect:intro}

The purpose of this paper is to classify the polarized symplectic automorphisms of the irreducible holomorphic symplectic projective varieties constructed by Beauville and Donagi \cite{MR818549}, namely, the Fano varieties of (smooth) cubic fourfolds.

Finite order symplectic automorphisms of K3 surfaces have been studied in detail by Nikulin in \cite{MR544937}. A natural generalization of K3 surfaces to higher dimensions is the notion of \emph{irreducible holomorphic symplectic manifolds} or \emph{hyper-K\"ahler manifolds} (\cf \cite{MR730926}), which by definition is a simply connected compact K\"ahler manifold with $H^{2,0}$ generated by a symplectic form (\ie nowhere degenerate holomorphic 2-form). Initiated by Beauville  \cite{MR728605}, some results have been obtained in the study of automorphisms of such manifolds. Let us mention \cite{MR2805992}, \cite{MR2957195}, \cite{MR2932167}, \cite{MR2967237}.

In \cite{MR818549}, Beauville and Donagi show that the Fano varieties of lines of smooth cubic fourfolds provide an example of a 20-dimensional family of irreducible holomorphic symplectic projective fourfolds. We propose to classify the polarized \emph{symplectic} automorphisms of this family.

Our result of classification is shown in the table
below\footnote{Please see the next page.}. We firstly make
several remarks concerning this table:
\begin{itemize}
\item As is remarked in \S\ref{sect:natural}, such an automorphism comes from a (finite order) automorphism of the cubic fourfold itself. Hence we express the automorphism in the fourth column as an element $f$ in $\PGL_6$.
\item In the third column, $n$ is the order of $f$, which is \emph{primary} (\ie a power of a prime number). The reason why we only listed the automorphisms with primary order is that every finite order automorphism is a product of commuting automorphisms with primary orders, by the structure of cyclic groups. See Remark \ref{rmk:final}.
\item We give an explicit basis of the family in the fifth column.
\item In the last column, we work out the fixed loci for a generic member. For geometric descriptions of the fixed loci, see
\S\ref{sect:fixed}.
\item   The Family I in our classification has been discovered in \cite{Mongardi}. 
\item   The Family V-(1) in our classification has been studied in
\cite{MR2967237}, where the fixed locus and the number of moduli are
calculated.
\item  The classification of prime order automorphisms of cubic fourfolds has been done in \cite{MR2820585}. I am also informed by G. Mongardi that he classifies the prime order symplectic automorphisms of hyper-K\"ahler varieties which are of $K3^{[n]}$-deformation type in his upcoming thesis.
\end{itemize}

\newpage
\begin{thm}[Classification]\label{thm:classification}
Here is the list of all families of cubic fourfolds equipped with an automorphism of primary order whose general member is smooth, such that the induced actions on the Fano varieties of lines are symplectic.\\
\begin{tabular}{|c|c|c|c|c|c|}
 \hline
Family &$p$ & $n=p^m$ & automorphism & basis for $\bar B$  & fixed loci\\
\hline
0& 1 & 1 & $f=\id_{\P^5}$ &  degree 3 monomials  & $F(X)$\\
\hline
I&11 & 11 & $f=\diag(1, \zeta, \zeta^{-1}, \zeta^3, \zeta^{-5}, \zeta^4)$& $x_0^2x_1$& 5 points\\
&&&$\zeta=e^{\frac{r}{11}\cdot 2\pi\sqrt{-1}}, 1\leq r\leq 10$ &$x_1^2x_2$&\\
&&&& $x_2^2x_3$&\\
&&&& $x_3^2x_4$&\\
&&&&$x_4^2x_0$&\\
&&&& $x_5^3$&\\
\hline
II&7 & 7 & $f=\diag(1, \zeta, \zeta^{-1}, \zeta^3, \zeta^{2}, \zeta^{-3})$& $x_0^2x_1$& 9 points\\
&&&$\zeta=e^{\frac{r}{7}\cdot 2\pi\sqrt{-1}}, 1\leq r\leq 6$ & $x_1^2x_2$&\\
&&&&$x_2^2x_3$&\\
&&&&$x_3^2x_4$&\\
&&&&$x_4^2x_5$&\\
&&&& $x_5^2x_0$&\\
&&&&$x_0x_2x_4$&\\
&&&&$x_1x_3x_5$&\\
\hline
III &5 & 5 & $f=\diag(1, \zeta, \zeta^{-1}, \zeta^{-2}, \zeta^2, \zeta^{2})$& $x_0^2x_1$&14 points\\
&&&$\zeta=e^{\frac{r}{5}\cdot 2\pi\sqrt{-1}}, 1\leq r\leq 4$ & $x_1^2x_2$&\\
&&&&$x_2^2x_3$&\\
&&&&$x_3^2x_0$&\\
&&&&$x_4^2x_5$&\\
&&&&$x_5^2x_4$&\\
&&&&$x_5^3$&\\
&&&&$x_4^3$&\\
&&&&$x_0x_2x_4$&\\
&&&&$x_0x_2x_5$&\\
&&&&$x_1x_3x_4$&\\
&&&&$x_1x_3x_5$&\\
\hline
\end{tabular}

\noindent\begin{tabular}{|c|c|c|c|c|c|}
 \hline
IV-(1)&3 & 3 & $f=\diag(1,1,1,1,\omega,\omega^2)$& degree 3 monomials on $x_0,\cdots, x_3$&27 points\\
&&&$\omega=e^{\frac{2\pi\sqrt{-1}}{3}}$ & $x_4^3$&\\
&&&& $x_5^3$&\\
&&&& $x_0x_4x_5$&\\
&&&& $x_1x_4x_5$&\\
&&&&$x_2x_4x_5$&\\
&&&&$x_3x_4x_5$&\\
\hline
IV-(2)&3 & 3 & $f=\diag(1,1,1,\omega,\omega,\omega)$& degree 3 monomials on $x_0,x_1,x_2$& an abelian surface\\
&&&$\omega=e^{\frac{2\pi\sqrt{-1}}{3}}$ &  degree 3 monomials on $x_3,x_4,x_5$& \\
\hline
IV-(3)&3 & 3 & $f=\diag(1,1,\omega,\omega,\omega^2,\omega^2)$& degree 3 monomials on $x_0,x_1$& 27 points\\
&&&$\omega=e^{\frac{2\pi\sqrt{-1}}{3}}$ & degree 3 monomials on $x_2,x_3$&\\
&&&& degree 3 monomials on $x_4,x_5$&\\
&&&& $x_0x_2x_4$&\\
&&&& $x_0x_2x_5$&\\
&&&& $x_0x_3x_4$&\\
&&&& $x_0x_3x_5$&\\
&&&& $x_1x_2x_4$&\\
&&&& $x_1x_2x_5$&\\
&&&& $x_1x_3x_4$&\\
&&&& $x_1x_3x_5$&\\
\hline
IV-(4)&3 & 9 & $f=\diag(1, \zeta^{-3}, \zeta^{3}, \zeta, \zeta^4, \zeta^{-2})$& $x_0^2x_1$& 9 points\\
&&&$\zeta=e^{\frac{r}{9}\cdot 2\pi\sqrt{-1}}, r=1,2,4,5,7,8$ &$x_1^2x_2$&\\
&&&& $x_2^2x_0$&\\
&&&& $x_3^2x_4$&\\
&&&& $x_4^2x_5$&\\
&&&& $x_5^2x_3$&\\
\hline
IV-(5)&3 & 9 & $f=\diag(1, \zeta^{3}, \zeta^{-3}, \zeta, \zeta, \zeta^{4})$& $x_0^2x_1$&9 points\\
&&&$\zeta=e^{\frac{r}{9}\cdot 2\pi\sqrt{-1}}, r=1,2,4,5,7,8$ & $x_1^2x_2$&\\
&&&&$x_2^2x_0$&\\
&&&&$x_3^2x_4$&\\
&&&&$x_3x_4^2$&\\
&&&&$x_3^3$&\\
&&&&$x_4^3$&\\
&&&&$x_5^3$&\\
\hline
\end{tabular}

\noindent\begin{tabular}{|c|c|c|c|c|c|}
 \hline
V-(1)&2 & 2 & $f=\diag(1,1,1,1,-1,-1)$&  degree 3 monomials on $x_0,\cdots, x_3$& 28 points and\\
&&&& $x_0x_5^2, x_1x_5^2, x_2x_5^2, x_3x_5^2$& a K3 surface\\
&&&& $x_0x_4^2, x_1x_4^2, x_2x_4^2, x_3x_4^2$&\\
&&&&$x_0x_4x_5, x_1x_4x_5, x_2x_4x_5, x_3x_4x_5$&\\
\hline
V-(2)(a)&2 & 4 & $f=\diag(1,1,-1,-1,\sqrt{-1},-\sqrt{-1})$& $x_0^3,x_0^2x_1, x_0x_1^2,x_1^3$&15 points\\
&&&& $x_0x_2^2$&\\
&&&& $x_1x_2^2$ &\\
&&&& $x_0x_3^2$ &\\
&&&& $x_1x_3^2$ &\\
&&&& $x_0x_2x_3$ &\\
&&&& $x_1x_2x_3$ &\\
&&&& $x_2x_4^2$ &\\
&&&& $x_3x_4^2$ &\\
&&&& $x_2x_5^2$ &\\
&&&& $x_3x_5^2$ &\\
&&&& $x_0x_4x_5$& \\
&&&& $x_1x_4x_5$& \\
\hline
V-(2)(b)&2 & 4 & $f=\diag(1,1,-1,-1,\sqrt{-1},-\sqrt{-1})$& $x_2\cdot$ degree 2 monomials on $x_0, x_1$& 15 points\\
&&&& $x_3\cdot$ degree 2 monomials on $x_0, x_1$&\\
&&&& $x_2^3$ &\\
&&&& $x_3^3$ &\\
&&&& $x_2^2x_3$ &\\
&&&& $x_2x_3^2$ &\\
&&&& $x_0x_4^2$ &\\
&&&& $x_1x_4^2$ &\\
&&&& $x_0x_5^2$ &\\
&&&& $x_1x_5^2$ &\\
&&&& $x_2x_4x_5$ &\\
&&&& $x_3x_4x_5$ &\\
\hline
V-(3)&2 & 8 & $f=\diag(1,-1,\zeta^{2},\zeta^{-2},\zeta,\zeta^3)$& $x_0^3$& 6 points\\
&&&$\zeta=e^{\frac{r}{8}\cdot2\pi\sqrt{-1}}$, $r=\pm1\mod 8$& $x_0x_1^2$ &\\
&&&& $x_1x_2^2$ &\\
&&&& $x_1x_3^2$ &\\
&&&& $x_0x_2x_3$& \\
&&&& $x_3x_4^2$ &\\
&&&& $x_2x_5^2$ &\\
&&&& $x_1x_4x_5$& \\
\hline
\end{tabular}

\end{thm}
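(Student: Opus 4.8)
The plan is to reduce the classification of polarized symplectic automorphisms of $F(X)$ to a finite combinatorial problem about diagonal linear actions on $\P^5$, and then carry out the enumeration prime by prime.

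First, by the discussion in \S\ref{sect:natural}, every polarized automorphism of $F(X)$ is induced by a finite-order automorphism of the pair $(\P^5,X)$, \ie an element $f\in\PGL_6$ with $f(X)=X$; the Pl\"ucker polarization is automatically preserved because $f$ acts linearly on $\P^5$, hence on $\mathrm{Gr}(2,6)$. So I would fix such an $f$, write $X=\{F=0\}$ with $F\circ f=\lambda F$ (possible since $f$ preserves $X$ and $\deg F=3$), and make the symplectic condition explicit. Via the Beauville--Donagi isomorphism \cite{MR818549} identifying $H^2(F(X))$ with the primitive part of $H^4(X)$, the symplectic form $\sigma\in H^{2,0}(F(X))$ corresponds to a generator of $H^{3,1}(X)$, which by Griffiths residue theory is $\Res\bigl(\tfrac{\Omega}{F^2}\bigr)$ with $\Omega$ the standard holomorphic $5$-form on $\P^5$. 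Since $f^*\Omega=\det(f)\,\Omega$ and $f^*F=\lambda F$, the induced action on $H^{3,1}(X)$ is multiplication by $\det(f)/\lambda^2$, a quantity unchanged under rescaling $f$ and hence well defined on $\PGL_6$. Therefore the induced automorphism is symplectic if and only if $\det(f)=\lambda^2$.

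Next I would diagonalize: as $f$ has finite (primary) order $n$ we may write $f=\diag(\zeta^{a_0},\dots,\zeta^{a_5})$ with $\zeta=e^{2\pi\sqrt{-1}/n}$ primitive, and $F\circ f=\zeta^{c}F$ for some $c$. The space of $f$-invariant cubics $\bar B$ is then spanned by the degree-$3$ monomials $x^I$ with $\sum_i I_i a_i\equiv c\pmod n$, and the symplectic condition becomes the single congruence $\sum_i a_i\equiv 2c\pmod n$. The whole problem thus depends only on the exponent data $(a_0,\dots,a_5;c)$ modulo the evident symmetries: permutation of coordinates, the torus rescaling $a_i\mapsto a_i+t$ (with $c\mapsto c+3t$), and the Galois action $\zeta\mapsto\zeta^k$ for $\gcd(k,n)=1$ (this explains the ranges of $r$ in the table). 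The key input is a smoothness criterion: the generic member of $\bar B$ is smooth if and only if one can choose, for each coordinate $i$, a monomial $x_i^2 x_{\phi(i)}\in\bar B$ (with $x_i^2x_i=x_i^3$ allowed), \ie a map $\phi\colon\{0,\dots,5\}\to\{0,\dots,5\}$ with $2a_i+a_{\phi(i)}\equiv c\pmod n$ for all $i$, whose functional graph decomposes into the chains and loops of invertible-polynomial theory (Fermat points $x_i^3$, chains $x_{i_1}^2x_{i_2}+\cdots+x_{i_k}^3$, and loops $x_{i_1}^2x_{i_2}+\cdots+x_{i_k}^2x_{i_1}$). Having fixed such a combinatorial type, the congruences $2a_i+a_{\phi(i)}\equiv c$ determine the $a_i$ up to the symmetries above, and solving them produces the divisibility constraints that bound $n$: a loop of length $\ell$ forces $(-2)^\ell\equiv1\pmod n$, so with $\ell\le 6$ the available primary orders are the prime powers dividing the relevant $(-2)^\ell-1$ (yielding the primes $5,7,11$ and the powers of $3$), while chains hanging from a Fermat point force $n$ to be a power of $2$, where a parity obstruction in $2a_i=-a_{\phi(i)}$ caps the chain length and gives $n\le 8$. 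I would then run through the chain/loop decompositions of the six coordinates for each feasible $n=p^m$ ($p=11,7,5,3,2$), solve the congruences, discard those violating $\sum a_i\equiv 2c$, and collect the survivors, which are exactly Families $0$ through V. Finally, for each family I would read off the fixed locus by determining the $f$-invariant lines on a generic invariant $X$ (lines inside a single eigenspace of $f$, or joining eigenvectors of distinct eigenvalues) and applying the geometric analysis of \S\ref{sect:fixed}.

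The main obstacle is the completeness of the enumeration rather than any individual computation. Establishing the smoothness criterion in the precise form above---equivalently, that a diagonal action admits a smooth invariant cubic exactly when an invertible-polynomial structure exists---and then proving that no combinatorial type beyond those listed can simultaneously satisfy smoothness, primary order, and the symplectic condition, is where the care lies. The cases $p=2$ and $p=3$ are the most delicate: there the symplectic congruence $\sum a_i\equiv 2c$ is genuinely independent (for $p\ne 3$ with $\phi$ a permutation it is essentially forced by summing the loop relations, but not once Fermat or chain pieces occur), and distinct linear systems can share the same $f$---this is the origin of the split into V-(2)(a) and V-(2)(b), which must be separated by hand.
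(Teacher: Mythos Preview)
Your outline follows the paper's strategy closely: reduce to diagonal linear actions, translate ``symplectic'' via Griffiths residues into the congruence $\sum a_i\equiv 2c\pmod n$, and organise the enumeration around the monomials $x_i^2x_{\phi(i)}$. The graph/combinatorial analysis you sketch (loops forcing $p\mid \frac{(-2)^\ell-1}{3}$, the tree picture for $p=2$) is exactly the content of the paper's Lemma~3.2 and the Case~V discussion.

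The one substantive divergence is your smoothness criterion. You state it as an \emph{if and only if}: the generic invariant cubic is smooth precisely when an invertible-polynomial (Fermat/chain/loop) structure exists on the six coordinates. The paper does \emph{not} use or prove this equivalence. It uses only the necessary direction (Lemma~3.1: smoothness forces, for each $i$, some monomial $x_i^2x_{i'}\in\bar B$), carries out the combinatorics under that weaker hypothesis, and then for every surviving candidate either exhibits an explicit smooth member (often itself an invertible polynomial) or exhibits an explicit singular locus. Several candidates that pass condition~$(*)$ --- e.g.\ Subcase~III(i), the order-$3$ case $\diag(1,1,1,\omega,\omega,\omega^2)$, and multiple order-$4$ and order-$8$ cases --- are eliminated only at this verification stage, not by combinatorics alone.

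So your ``only if'' direction is the genuine gap: you would need to show that whenever the generic member is smooth one can \emph{choose} $\phi$ so that its functional graph is a disjoint union of loops and chains, which is not automatic (distinct indices can share a weight, so in-degrees can exceed~$1$ even when $p\neq 2$). You flag this as ``where the care lies'', but give no argument. If you either prove that equivalence, or replace it by the paper's necessary condition plus a per-family smoothness/singularity check, your proof becomes the paper's. A minor related imprecision: a loop of length $\ell$ forces $p\mid\frac{(-2)^\ell-1}{3}$, not $(-2)^\ell\equiv 1\pmod n$; the bound on the exponent $m$ requires the further case analysis the paper carries out.
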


The structure of this paper is as follows. In \S\ref{sect:natural}
we set up the basic notation, and show that any polarized
automorphism of the Fano variety comes from a finite order
automorphism of the cubic fourfold. Then in \S\ref{sect:setting} we
reinterpret the assumption of being symplectic into a numerical
equation by using Griffiths' theory of residue. Finally we do the
classification in \S\ref{sect:classificatoin}. The basic observation
is that the generic smoothness of the family of cubics imposes
strong combinatoric constrains. 

Throughout this paper, we work over the field of complex numbers with a fixed choice of $\sqrt{-1}$.

\noindent\textbf{Acknowledgements:} I would like to express my gratitude to Olivier Benoist, Zhi Jiang and Junyi Xie for their careful reading of the preliminary version of the paper as well as many helpful suggestions. I also want to thank Chiara Camere for a nice conversation at Luminy and for pointing out to me the paper \cite{MR2820585}. Finally I wish to thank Giovanni Mongardi for informing me about his related classification work in his upcoming thesis.

\section{Fano varieties of lines of cubic fourfolds}\label{sect:natural}
First of all, let us fix the notation and make some basic
constructions. Let $V$ be a 6-dimensional $\C$-vector space, and
$\P^5:=\P(V)$ be the corresponding projective space of 1-dimensional
subspaces of $V$. Let $X\subset \P^5$ be a smooth cubic fourfold. The following
 subvariety of the Grassmannian $\Gr(\P^1, \P^5)$
\begin{equation}\label{eqn:F}
F(X):=\left\{[L]\in \Gr(\P^1, \P^5)~|~ L\subset X\right\}
\end{equation}
is called the \emph{Fano variety of lines}\footnote{In the
scheme-theoretic language, $F(X)$ is defined to be the zero locus of
$s_T\in H^0\left(\Gr(\P^1, \P^5), \Sym^3S\dual\right)$, where $S$ is
the universal tautological subbundle on the Grassmannian, and $s_T$
is the section induced by $T$ using the morphism of vector bundles
$\Sym^3H^0(\P^5,\sO(1))\otimes \sO\to \Sym^3 S\dual$ on $\Gr(\P^1,
\P^5)$.} of $X$. It is well-known that $F(X)$ is a 4-dimensional
smooth projective variety. Throughout this paper, we always equip
$F(X)$ with the polarization $\sL$, which is by definition the
restriction to it of the Pl\"ucker line bundle on the ambient
Grassmannian $\Gr(\P^1, \P^5)$.

Consider the incidence variety (\ie the universal projective line
over $F(X)$):
$$P(X):=\left\{(x, [L])\in X\times F(X)~|~ x\in L\right\},$$
and then the following natural correspondence:
\begin{displaymath}
  \xymatrix{
  P(X)\ar[r]^{q} \ar[d]_{p} & X\\
  F(X) & \\
  }
\end{displaymath}
we have the following
\begin{thm}[Beauville-Donagi \cite{MR818549}]\label{thm:BD}
 Keeping the above notation,
\begin{itemize}
  \item[$(i)$] $F(X)$ is a 4-dimensional irreducible holomorphic
  symplectic projective variety, \ie $F(X)$ is simply-connected and
  $H^{2,0}(F(X))=\C\cdot\omega$ with $\omega$ a no-where degenerate
  holomorphic 2-form.
  \item[$(ii)$] The correspondence $$p_*q^*: H^4(X,\Z)\to H^2(F(X), \Z)$$ is an
isomorphism of Hodge structures.
\end{itemize}
\end{thm}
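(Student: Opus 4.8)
The plan is to derive every assertion from the geometry of the incidence correspondence, transporting the Hodge structure of the cubic along $p_*q^*$ and reading off the symplectic form from it.

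First I would settle smoothness and dimension directly from the scheme-theoretic description in the footnote: $F(X)$ is the zero locus of $s_T \in H^0\!\left(\Gr(\P^1,\P^5),\Sym^3 S\dual\right)$, where $\Gr(\P^1,\P^5)$ has dimension $8$ and $\Sym^3 S\dual$ has rank $4$. Smoothness of $X$ is exactly the transversality of $s_T$, so that $F(X)$ is smooth of the expected dimension $8-4=4$; this is a pointwise Jacobian computation at $[L]$ relating the differential of $s_T$ to the cubic defining $X$ restricted to $L$ together with its normal derivatives. Connectedness follows from a Bertini-type theorem, since $\Sym^3 S\dual$ is globally generated on the Grassmannian.

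Next comes the heart of the matter. Because $P(X)$ is smooth projective of dimension $5$, with $p$ a $\P^1$-bundle and $q$ proper, the map $q^*$ is a morphism of Hodge structures preserving bidegree while $p_*$ is the Gysin map of relative dimension $1$, shifting bidegree by $(-1,-1)$. Hence $p_*q^*\colon H^4(X)\to H^2(F(X))$ is a morphism of Hodge structures of bidegree $(-1,-1)$, which already establishes the Hodge-structure compatibility required in $(ii)$ and, restricted to $H^{3,1}(X)$, lands in $H^{2,0}(F(X))$. Since $X$ is a smooth cubic fourfold, $h^{3,1}(X)=1$, so this produces a distinguished class $\omega = p_*q^*(\eta)$ with $\eta$ a generator of $H^{3,1}(X)$. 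Realizing $\eta$ as a Griffiths residue, exactly as set up in \S\ref{sect:setting}, gives an explicit holomorphic $2$-form on $F(X)$, and I would verify it is nowhere degenerate by the local residue computation, yielding the symplectic form and $H^{2,0}(F(X))=\C\cdot\omega$.

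To upgrade $p_*q^*$ to an isomorphism I would combine a dimension count with injectivity. Leray--Hirsch for the $\P^1$-bundle $p$ gives $H^*(P(X)) = H^*(F(X)) \oplus h\cdot H^{*-2}(F(X))$ with $h=c_1(\sO_p(1))$, which lets me express both $p_*q^*$ and a correspondence in the opposite direction, assembled from $p^*$, cup product with powers of $h$, and $q_*$, in terms of intersection numbers on $P(X)$; the key point is that a suitable such composition is a nonzero multiple of the identity on the relevant graded piece. Matching the Betti numbers $b_4(X)=23=b_2(F(X))$ then forces bijectivity. The main obstacle is precisely this injectivity computation, together with the two global differential-geometric statements --- that $\omega$ is everywhere nondegenerate and that $F(X)$ is simply connected --- for which the cleanest route is a deformation argument: for Pfaffian cubics $F(X)$ is isomorphic to the Hilbert square $S^{[2]}$ of a K3 surface, and since smoothness, $\pi_1$, and the Hodge numbers are deformation invariants, the general $F(X)$ inherits simple connectedness and the holomorphic-symplectic structure from $S^{[2]}$.
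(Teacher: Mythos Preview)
The paper does not supply a proof of this theorem: it is stated with attribution to Beauville--Donagi \cite{MR818549} and used as a black box throughout. There is therefore no ``paper's own proof'' to compare your proposal against.

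That said, your sketch tracks the original Beauville--Donagi argument fairly closely, especially in its reliance on the deformation to Pfaffian cubics and the identification $F(X)\simeq S^{[2]}$ there. A few points of ordering and rigor deserve attention. First, the equality $b_2(F(X))=23$ is not available before you have either computed the cohomology of $F(X)$ directly (e.g.\ via Koszul and Borel--Weil on the Grassmannian) or invoked the deformation to $S^{[2]}$; as written, you appeal to it to prove bijectivity of $p_*q^*$ \emph{before} introducing the deformation argument, which is circular. Second, ``smoothness of $X$ is exactly the transversality of $s_T$'' is not literally an equivalence and needs the actual Jacobian check you allude to. Third, the nowhere-degeneracy of $\omega$ is not something one extracts from a local residue computation in any straightforward way; Beauville--Donagi obtain it from the $S^{[2]}$ identification, and you end up doing the same, so the earlier promise of a direct verification should be dropped. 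Finally, the \emph{integral} isomorphism in $(ii)$ is more delicate than its rational counterpart: your injectivity-plus-rank argument gives at best a finite-index inclusion over $\Z$, and the absence of torsion and of a nontrivial cokernel requires additional input.
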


By definition, an automorphism $\psi$ of $F(X)$ is called
\emph{polarized}, if it preserves the Pl\"ucker polarization:
$\psi^*\sL\isom\sL$. Now we investigate the meaning for an
automorphism of $F(X)$ to be polarized.

\begin{lemma}\label{lemma:natural}
An automorphism $\psi$ of $F(X)$ is polarized if and only if it is
induced from an automorphism of the cubic fourfold $X$ itself.
\end{lemma}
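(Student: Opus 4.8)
The plan is to prove the two implications separately, the direction ``induced $\Rightarrow$ polarized'' being formal and the converse being the substance. For the easy direction, suppose $\psi$ is induced by an automorphism $g$ of $X$. Since $X$ is a smooth cubic fourfold, $\Pic(X)=\Z\cdot\sO_X(1)$ by Lefschetz, so $g$ preserves $\sO_X(1)$ and hence acts on $H^0(X,\sO_X(1))=V\dual$; thus $g$ is the restriction of a linear $\bar g\in\PGL(V)$. The induced $\wedge^2\bar g\in\PGL(\wedge^2 V)$ preserves the Pl\"ucker embedding of $\Gr(\P^1,\P^5)$ together with the Pl\"ucker line bundle, and it preserves $F(X)$ because $\bar g$ carries each line $L\subset X$ to a line $\bar g(L)\subset X$. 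Restricting to $F(X)$ gives $\psi^*\sL\isom\sL$.

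For the converse, assume $\psi^*\sL\isom\sL$. The first task is to identify the complete linear system $|\sL|$. I would compute $H^0(F(X),\sL)$ from the Koszul resolution of $\sO_{F(X)}$ on $G:=\Gr(\P^1,\P^5)$, using that $F(X)$ is the zero locus of a regular section of the rank-$4$ bundle $\Sym^3 S\dual$ on the $8$-dimensional $G$ (as recalled in the footnote to \eqref{eqn:F}). Twisting this resolution by the Pl\"ucker bundle $\sO_G(1)$ and applying Bott vanishing to kill the higher Koszul terms, one obtains that restriction is an isomorphism $\wedge^2 V\dual=H^0(G,\sO_G(1))\xrightarrow{\ \isom\ }H^0(F(X),\sL)$. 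Consequently $|\sL|$ re-embeds $F(X)$, linearly non-degenerately, as the original Pl\"ucker-embedded $F(X)\subset G\subset\P(\wedge^2 V)$.

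Since $\psi^*\sL\isom\sL$, the automorphism $\psi$ acts on $H^0(F(X),\sL)=\wedge^2 V\dual$ and therefore extends to a linear $\tilde\psi\in\PGL(\wedge^2 V)$ preserving $F(X)$. The crucial step is to upgrade this to an automorphism of the ambient Grassmannian. I would do so by showing that $G$ is cut out by the quadrics through $F(X)$, i.e.\ that $H^0\bigl(\P(\wedge^2 V),\sI_{F(X)}(2)\bigr)=H^0\bigl(\P(\wedge^2 V),\sI_{G}(2)\bigr)$ coincides with the span of the Pl\"ucker relations; this is again a Koszul/Bott computation, now in degree $2$. Granting it, $\tilde\psi$ preserves the space of quadrics through $F(X)$, hence preserves their common zero locus $G$. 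As $\Aut\bigl(\Gr(2,6)\bigr)=\PGL(V)$ (there is no duality involution since $2\neq 6-2$) and $G$ spans $\P(\wedge^2 V)$, we conclude $\tilde\psi=\wedge^2\bar g$ for a unique $\bar g\in\PGL(V)$.

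It remains to check that $\bar g$ comes from $X$. Since $\bar g$ preserves $F(X)$, it permutes the lines parametrized by $F(X)$, hence preserves their union $q(P(X))=X$ (using that the lines of $F(X)$ cover $X$); thus $\bar g\in\Aut(X)$ and $\psi$ is the automorphism of $F(X)$ induced by $\bar g|_X$. The main obstacle is the pair of cohomological computations: showing that $|\sL|$ recovers the Pl\"ucker linear system, and that the quadrics vanishing on $F(X)$ are exactly the Pl\"ucker quadrics. These are the only non-formal inputs; once they are in place, the recovery of $\bar g$ is bookkeeping built on the standard identification $\Aut(\Gr(2,6))=\PGL_6$.
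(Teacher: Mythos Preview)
Your argument is correct. The paper itself supplies no proof here, deferring entirely to \cite[Proposition~4]{RmkTorelli}. What you have written is essentially the standard argument behind that proposition: recover the Pl\"ucker embedding from $|\sL|$ by showing $H^0(F(X),\sL)\cong\wedge^2 V\dual$ via the Koszul resolution of $\sO_{F(X)}$ on $G$ and Bott vanishing; then recover the Grassmannian as the common zero locus of the quadrics through $F(X)$ (another Koszul/Bott step, amounting to injectivity of $H^0(G,\sO_G(2))\to H^0(F(X),\sL^{\otimes 2})$); and finally use $\Aut(\Gr(2,6))=\PGL_6$ (no duality involution since $2\neq 4$) to descend the automorphism to $\P^5$ and hence to $X$, the last step using that the lines parametrized by $F(X)$ sweep out $X$. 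You have correctly isolated the two cohomological computations as the only non-formal inputs and flagged them as requiring verification; they do go through, but a fully self-contained write-up would spell out the relevant weights and check the Bott vanishing explicitly. In short, you have filled in the proof that the paper outsources.
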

\begin{proof}
See \cite[Proposition 4]{RmkTorelli}.
%
\end{proof}

Define $\Aut(X)$ to be the automorphism group of $X$, and
$\Aut^{pol}(F(X), \sL)$ or simply $\Aut^{pol}(F(X))$ to be the group
of polarized automorphisms of $F(X)$. Then Lemma \ref{lemma:natural}
says that the image of the natural homomorphism $\Aut(X)\to
\Aut(F(X))$ is exactly $\Aut^{pol}(F(X))$. This homomorphism of
groups is clearly injective (since for each point of $X$ there
passes a 1-dimensional family of lines), hence we have
\begin{cor}\label{cor:induced}
  The natural morphism $$\Aut(X)\lra{\isom} \Aut^{pol}(F(X))$$ which sends an automorphism $f$ of $X$ to the
  induced (polarized) automorphism $\hat f$ of $F(X)$ is an isomorphism.
\end{cor}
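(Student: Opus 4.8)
The plan is to assemble the isomorphism from three ingredients: that $f\mapsto\hat f$ is a group homomorphism into $\Aut^{pol}(F(X))$, that it is surjective (which is essentially Lemma~\ref{lemma:natural}), and that it is injective. The first two are nearly formal, while the genuinely geometric input is the injectivity.

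First I would verify that $f\mapsto\hat f$ is a well-defined homomorphism landing in the polarized automorphisms. An automorphism $f$ of $X$ carries lines contained in $X$ to lines contained in $X$, so $\hat f([L]):=[f(L)]$ is a self-map of $F(X)$; the identities $\widehat{\id_X}=\id_{F(X)}$ and $\widehat{f\circ g}=\hat f\circ\hat g$ are immediate, so $\hat f$ is an automorphism of $F(X)$ with inverse $\widehat{f^{-1}}$ and the assignment is a homomorphism. Since an automorphism of a smooth cubic fourfold is the restriction of a projective linear transformation of $\P^5$, the induced map on $\Gr(\P^1,\P^5)$ comes from a linear action on $\bigwedge^2 V$ and hence preserves the Pl\"ucker embedding; therefore $\hat f$ preserves the Pl\"ucker polarization $\sL$, \ie $\hat f\in\Aut^{pol}(F(X))$. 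For surjectivity I would invoke Lemma~\ref{lemma:natural}: every polarized automorphism $\psi$ of $F(X)$ is induced from an automorphism of $X$, which says precisely that $\psi=\hat f$ for some $f\in\Aut(X)$.

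The main point is injectivity, and here I would argue geometrically. Suppose $\hat f=\id_{F(X)}$, so that $f$ fixes every line $L\subset X$ as a set. Fix an arbitrary point $x\in X$ and consider the lines of $X$ through $x$: writing a general line through $x$ as $\{x+tv\}$ with $v$ ranging over the $\P^4$ of directions, the condition $F(x+tv)\equiv 0$ in $t$ becomes, since $F(x)=0$, three equations on $v$ of degrees $1,2,3$, cutting out a locus of dimension $\geq 4-3=1$ in $\P^4$. In particular there are at least two distinct lines $L_1,L_2\subset X$ through $x$, and two distinct lines meeting at $x$ satisfy $L_1\cap L_2=\{x\}$. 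Since $f(L_i)=L_i$, we get $f(x)\in f(L_1)\cap f(L_2)=\{x\}$, whence $f(x)=x$; as $x$ was arbitrary, $f=\id_X$. The only delicate point is the dimension count ensuring a positive-dimensional family of lines through every point of $X$, which is classical for smooth cubic fourfolds; note that the genuine analytic content of the corollary is hidden in the cited Lemma~\ref{lemma:natural}, so the remaining task reduces to this elementary verification.
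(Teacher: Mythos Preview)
Your proof is correct and follows essentially the same approach as the paper: surjectivity comes from Lemma~\ref{lemma:natural}, and injectivity is exactly the paper's parenthetical remark that ``for each point of $X$ there passes a 1-dimensional family of lines,'' which you have spelled out in detail via the dimension count and the two-line intersection argument. The paper's proof is simply a terser version of yours.
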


\begin{rmk}\label{rmk:finite}
This group is a \emph{finite} group. Indeed, since $\Pic(X)=\Z\cdot\sO_X(1)$, all its automorphisms come from linear automorphisms of $\P^5$, hence $\Aut(X)$ is
a closed subgroup of $\PGL_6$ thus of finite type. On the other hand, $H^0(F(X),T_{F(X)})=H^{1,0}(F(X))=0$, which implies that the group considered is also discrete, therefore finite.
\end{rmk}

By Corollary \ref{cor:induced}, the classification of polarized symplectic automorphisms of $F(X)$ is equivalent to the classification of automorphism of cubic fourfolds such that the induced action satisfies the symplectic condition. The first thing to do is to find a  reformulation of this \emph{symplectic condition} purely in terms of the action on the cubic fourfold:

\section{The symplectic condition}\label{sect:setting}
The content of this section has been done in my paper \cite[Section 1]{SympAut}. For the sake of completeness, we briefly reproduce it here.
Keeping the notation in the previous section.
Suppose the cubic fourfold $X\subset \P^5$ is defined by a polynomial $T\in H^0(\P^5, \sO(3))=\Sym^3V\dual$. Let
$f$ be an automorphism of $X$. By Remark \ref{rmk:finite}, $f$ is the restriction of a finite order linear automorphism of
$\P^5$ preserving $X$, still denoted by $f$. Let $n\in \N^+$ be its order. We can assume without loss of generality
that $f:\P^5\to \P^5$ is given by:
\begin{equation}\label{eqn:f}
  f:[x_0: x_1:\cdots: x_5] \mapsto [\zeta^{e_0}x_0:
  \zeta^{e_1}x_1:\cdots:
\zeta^{e_5}x_5],
\end{equation}
where $\zeta=e^{\frac{2\pi \sqrt{-1}}{n}}$ is a primitive $n$-th
root of unity and $e_i\in \Z/n\Z$ for $i=0,\cdots, 5$.

It is clear that $X$ is preserved by $f$if and only if the defining equation $T$ is contained in an eigenspace of $\Sym^3V\dual$. More precisely: let the coordinates $x_0,
x_1, \cdots, x_5$ of $\P^5$ be a basis of $V\dual$, then $\left\{\underline
x^{~\underline \alpha}\right\}_{\underline \alpha\in \Lambda}$ is a
basis of $\Sym^3V\dual=H^0(\P^5, \sO(3))$, where $\underline
x^{~\underline \alpha}$ denotes $x_0^{\alpha_0}x_1^{\alpha_1}\cdots
x_5^{\alpha_5}$. Define
\begin{equation}\label{eqn:Lambda}
\Lambda:=\left\{\underline \alpha=(\alpha_0, \cdots, \alpha_5)\in
\N^5~|~ \alpha_0+\cdots+\alpha_5=3\right\}.
\end{equation}
We write the eigenspace decomposition of $\Sym^3V\dual$:
$$\Sym^3V\dual=\bigoplus_{j\in \Z/n\Z}\left(\bigoplus_{\underline\alpha\in \Lambda_j}\C\cdot\underline x^{~\underline
\alpha}\right),$$ where for each $j\in \Z/n\Z$, we define the subset
of $\Lambda$
\begin{equation}\label{eqn:Lambdaj}
\Lambda_j:=\left\{\underline \alpha=(\alpha_0, \cdots, \alpha_5)\in
\N^5~|~ \substack{\alpha_0+\cdots+\alpha_5=3\\
e_0\alpha_0+\cdots +e_5\alpha_5=j \mod n }\right\}.
\end{equation}
and the eigenvalue of $\bigoplus_{\underline\alpha\in
\Lambda_j}\C\cdot\underline x^{~\underline \alpha}$ is thus $\zeta^j$.
Therefore, explicitly speaking, we have:
\begin{lemma}\label{lemma:preserve}
A cubic fourfold $X$ is preserved by the $f$ in (\ref{eqn:f}) if
and only if there exists a $j\in \Z/n\Z$ such that its defining
polynomial $T\in \bigoplus_{\underline\alpha\in
\Lambda_j}\C\cdot\underline x^{~\underline \alpha}$.
\end{lemma}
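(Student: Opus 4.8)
The plan is to reduce the geometric condition ``$X$ is preserved by $f$'' to the purely algebraic statement that the defining cubic $T$ is an eigenvector of the induced linear action of $f$ on $\Sym^3V\dual$, and then to observe that the eigenvectors of a diagonal operator are exactly the vectors supported in a single eigenspace.

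First I would record the induced action explicitly. The map $f$ of (\ref{eqn:f}) pulls back the basis $x_0,\dots,x_5$ of $V\dual$ by $x_i\mapsto \zeta^{e_i}x_i$, and hence acts on the monomial basis of $\Sym^3V\dual$ by $\underline x^{~\underline\alpha}\mapsto \zeta^{\,e_0\alpha_0+\cdots+e_5\alpha_5}\,\underline x^{~\underline\alpha}$; call this operator $\sigma=f^*$. By the definition of $\Lambda_j$ in (\ref{eqn:Lambdaj}), $\sigma$ is diagonal in the monomial basis, its eigenvalues are the powers $\zeta^j$ with $j\in\Z/n\Z$, and the $\zeta^j$-eigenspace is precisely $\bigoplus_{\underline\alpha\in\Lambda_j}\C\cdot\underline x^{~\underline\alpha}$. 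Consequently a nonzero $T$ is an eigenvector of $\sigma$ if and only if it lies in one of these eigenspaces, i.e.\ if and only if there is a $j$ with $T\in\bigoplus_{\underline\alpha\in\Lambda_j}\C\cdot\underline x^{~\underline\alpha}$. This is the elementary linear-algebra half of the statement.

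It then remains to prove that $f(X)=X$ if and only if $T$ is an eigenvector of $\sigma$. Since $f(X)=X$ is equivalent to $f^{-1}(X)=X$, and a direct computation gives $f^{-1}(X)=V(\sigma(T))$, the condition that $f$ preserve $X$ is the same as the equality of zero loci $V(\sigma(T))=V(T)$. One implication is now immediate: if $\sigma(T)=\zeta^jT$ then $V(\sigma(T))=V(\zeta^jT)=V(T)$, so $f$ preserves $X$. For the converse I would invoke that $X$ is smooth: a smooth cubic fourfold is connected and reduced, so $T$ is irreducible in $\C[x_0,\dots,x_5]$ and the degree-$3$ part of the homogeneous ideal $I(X)$ is the single line $\C\cdot T$. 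Since $\sigma(T)$ is again a cubic cutting out $X$, unique factorization then forces $\sigma(T)=\lambda T$ for some $\lambda\in\C^{\times}$, so $T$ is an eigenvector of $\sigma$, completing the equivalence.

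The one point requiring genuine care — the ``main obstacle'' — is precisely this converse passage, from the set-theoretic equality $V(\sigma(T))=V(T)$ to the equality of polynomials up to scalar $\sigma(T)=\lambda T$. This is exactly where smoothness (hence irreducibility, hence $I(X)_3=\C\cdot T$) is used; for a nonreduced or reducible cubic the zero locus would not pin down the equation up to scalar and the clean eigenspace description would break down. Everything else is the bookkeeping of a diagonal operator, so once this identification is secured the lemma follows at once.
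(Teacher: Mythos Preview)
Your argument is correct and follows exactly the line the paper takes: the paper simply asserts, in the paragraph preceding the lemma, that ``$X$ is preserved by $f$ if and only if the defining equation $T$ is contained in an eigenspace of $\Sym^3V\dual$,'' treating the statement as evident from the eigenspace decomposition and offering no further proof. Your write-up supplies the details the paper omits, in particular the use of smoothness (hence irreducibility of $T$) to pass from $V(\sigma(T))=V(T)$ to $\sigma(T)\in\C^\times\cdot T$.
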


Then we deal with the symplectic condition. Note that Theorem \ref{thm:BD} (ii) says in particular that  $$p_*q^*:
H^{3,1}(X)\lra{\isom}H^{2,0}(F(X))$$ is an isomorphism. If $X$ is
equipped with an action $f$ as before, we denote by $\hat f$ the
induced automorphism of $F(X)$. Since the construction of $F(X)$ as well as the correspondence $p_*q^*$ are both
functorial with respect to $X$, the condition that $\hat f$ is
\emph{symplectic} \ie $\hat f^*$ acts on $H^{2,0}(F(X))$ as identity, is equivalent to the condition that
$f^*$ acts as identity on $H^{3,1}(X)$. Work it out explicitly, we
arrive at the congruence equation (\ref{eqn:j}) in the following

\begin{lemma}[Symplectic condition]\label{lemma:symp}
Let $f$ be the linear automorphism in (\ref{eqn:f}), and $X$ be a cubic fourfold defined by equation $T$. Then the followings are equivalent:
\begin{itemize}
 \item $f$ preserves $X$ and the
induced action $\hat f$ on $F(X)$ is symplectic;
 \item  There
exists a $j\in \Z/n\Z$ satisfying the equation
\begin{equation}\label{eqn:j}
e_0+e_1+\cdots+e_5=2j \mod n,
\end{equation}
such that the defining polynomial $T\in
\bigoplus_{\underline\alpha\in \Lambda_j}\C\cdot\underline
x^{~\underline \alpha}$, where as in (\ref{eqn:Lambdaj})
\begin{equation*}
  \Lambda_j:=\left\{\underline \alpha=(\alpha_0, \cdots, \alpha_5)\in
\N^5~|~ \substack{\alpha_0+\cdots+\alpha_5=3\\
e_0\alpha_0+\cdots +e_5\alpha_5=j \mod n }\right\}.
\end{equation*}
\end{itemize}
\end{lemma}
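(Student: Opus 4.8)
The plan is to use the reduction already carried out in the discussion preceding the statement: since the Beauville--Donagi construction and the correspondence $p_*q^*$ are functorial in $X$, Theorem~\ref{thm:BD}(ii) intertwines $\hat f^*$ on $H^{2,0}(F(X))$ with $f^*$ on $H^{3,1}(X)$, so that $\hat f$ is symplectic exactly when $f^*$ acts as the identity on the line $H^{3,1}(X)$. The clause ``$f$ preserves $X$'' is likewise handled: by Lemma~\ref{lemma:preserve} it is equivalent to $T$ lying in a single eigenspace $\bigoplus_{\underline\alpha\in\Lambda_j}\C\cdot\underline x^{\,\underline\alpha}$ for some (necessarily unique, as $T\neq 0$) $j\in\Z/n\Z$, i.e. $f^*T=\zeta^{j}T$. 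What remains, and what the phrase ``work it out explicitly'' refers to, is to compute the eigenvalue of $f^*$ on $H^{3,1}(X)$ and show it equals $1$ precisely when $e_0+\cdots+e_5\equiv 2j\pmod{n}$.

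To make $H^{3,1}(X)$ concrete I would invoke Griffiths' residue calculus for smooth hypersurfaces. For the cubic fourfold $X=V(T)\subset\P^5$ the space $H^{3,1}(X)$ is one-dimensional, and it is automatically primitive because the non-primitive part of $H^4(X)$ is spanned by the square of the hyperplane class, which is of type $(2,2)$. Writing $\Omega:=\sum_{i=0}^{5}(-1)^i x_i\,dx_0\wedge\cdots\wedge\widehat{dx_i}\wedge\cdots\wedge dx_5$ for the standard $5$-form, Griffiths' theorem identifies $H^{3,1}(X)$ with the degree-zero part of the Jacobian ring of $T$ and exhibits the explicit generator
\begin{equation*}
\omega \;=\; \Res\!\left(\frac{\Omega}{T^{2}}\right),
\end{equation*}
the pole order $2$ being exactly what lands the residue in the Hodge piece $F^{3}H^4=H^{3,1}$.

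It then suffices to track how $f^*$ scales this generator. From $f^*x_i=\zeta^{e_i}x_i$ (hence $f^*dx_i=\zeta^{e_i}dx_i$) every summand of $\Omega$ picks up the same factor, giving $f^*\Omega=\zeta^{\,e_0+\cdots+e_5}\,\Omega$; combined with $f^*T=\zeta^{j}T$ this yields
\begin{equation*}
f^*\omega \;=\; \Res\!\left(\frac{f^*\Omega}{(f^*T)^{2}}\right) \;=\; \zeta^{\,e_0+\cdots+e_5-2j}\,\omega .
\end{equation*}
Thus $f^*$ is the identity on $H^{3,1}(X)$ if and only if $\zeta^{\,e_0+\cdots+e_5-2j}=1$, i.e. $e_0+\cdots+e_5\equiv 2j\pmod{n}$, which is equation~(\ref{eqn:j}); matching this $j$ with the eigenspace index of $T$ closes the equivalence in both directions.

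I expect the only genuinely delicate points to be bookkeeping ones: first, that the residue normalization is correct, so that pole order $2$ produces a class in $H^{3,1}$ rather than a deeper step of the Hodge filtration, and that this class is primitive so no hyperplane-class correction intervenes; and second, that the functoriality underpinning the reduction of the first paragraph is genuine equivariance, i.e. $p_*q^*\circ f^* = \hat f^*\circ p_*q^*$ on $H^4(X)$. Both are standard for cubic fourfolds, so the heart of the argument is really the one-line eigenvalue computation above.
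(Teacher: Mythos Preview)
Your proposal is correct and follows essentially the same approach as the paper's proof: reduce via functoriality of $p_*q^*$ and Lemma~\ref{lemma:preserve} to computing the eigenvalue of $f^*$ on $H^{3,1}(X)$, then use Griffiths' description $H^{3,1}(X)=\C\cdot\Res(\Omega/T^2)$ and the scalings $f^*\Omega=\zeta^{e_0+\cdots+e_5}\Omega$, $f^*T=\zeta^{j}T$ to obtain $f^*\omega=\zeta^{e_0+\cdots+e_5-2j}\omega$. The paper's proof is terser but identical in substance; your additional remarks on primitivity and equivariance are side comments rather than a different route.
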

\begin{proof}
  Firstly, the condition that $f$ preserves $X$ is given in Lemma
  \ref{lemma:preserve}. As is remarked before the lemma, $\hat f$ is symplectic if and
  only if $f^*$ acts as identity on $H^{3,1}(X)$. On the other hand, by
  Griffiths' theory of the Hodge structures of hypersurfaces (\cf \cite[Chapter 18]{MR1997577}), $H^{3,1}(X)$ is
  generated by the residue $\Res{\frac{\Omega}{T^2}}$, where $\Omega:=\sum_{i=0}^5(-1)^ix_i\dd x_0\wedge\cdots\wedge\widehat{\dd
  x_i}\wedge\cdots\wedge \dd x_5$ is a generator of $H^0(\P^5,
  K_{\P^5}(6))$. $f$ being defined in (\ref{eqn:f}), we find
  $f^*\Omega=\zeta^{e_0+\cdots+e_5}\Omega$ and $f^*(T)=\zeta^jT$.
  Hence the action of $f^*$ on $H^{3,1}(X)$ is the multiplication by
  $\zeta^{e_0+\cdots+e_5-2j}$, from where we obtain the equation (\ref{eqn:j}).
\end{proof}

\section{Classification}\label{sect:classificatoin}
We now turn to the classification. Retaining the notation of \S\ref{sect:setting}: (\ref{eqn:f}),(\ref{eqn:Lambda}),(\ref{eqn:Lambdaj}),(\ref{eqn:j}). We define the parameter space
\begin{equation}\label{eqn:Bbar}
\bar B:= \P\left(\bigoplus_{\underline \alpha\in \Lambda_j}\C\cdot
\underline x^{~\underline \alpha}\right).
\end{equation}
Let $B\subset \bar B$ be the open subset parameterizing the smooth ones.

In this paper we are only interested in the \emph{smooth} cubic fourfolds, that is the case when $B\neq \vide$, or equivalently, when a general member of $\bar B$ is smooth. The easy observation below (see Lemma \ref{lemma:2}) which makes the classification feasible is that this non-emptiness condition imposes strong combinatoric constrains on the defining equations.

\begin{lemma}\label{lemma:2}
If a general member in $\bar B$ is smooth then for each
$i\in\{0,1,\cdots, 5\}$, there exists $i'\in\{0,1,\cdots, 5\}$, such
that ${x_i}^2x_{i'}\in \bar B$.
\end{lemma}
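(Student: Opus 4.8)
The plan is to argue by contraposition: I will show that if, for some fixed index $i$, there is \emph{no} $i'$ with $x_i^2x_{i'}\in\bar B$, then \emph{every} member of $\bar B$ is singular at the coordinate point $P_i:=[0:\cdots:0:1:0:\cdots:0]$ (with $1$ in the $i$-th slot), so that $B=\vide$ and the general member of $\bar B$ fails to be smooth.

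First I would fix such an $i$ and take an arbitrary $T=\sum_{\underline\alpha\in\Lambda_j}c_{\underline\alpha}\,\underline x^{~\underline\alpha}\in\bar B$. The key computation is to evaluate the partial derivatives $\partial T/\partial x_k$ at $P_i$ for each $k\in\{0,\ldots,5\}$. Since a monomial $\underline x^{~\underline\alpha}$ has $\partial/\partial x_k$ equal to $\alpha_k\,x_k^{\alpha_k-1}\prod_{l\neq k}x_l^{\alpha_l}$, and since $P_i$ has $x_i=1$ and all other coordinates $0$, such a derivative is nonzero at $P_i$ only when $\alpha_l=0$ for all $l\notin\{i,k\}$ and, in the case $k\neq i$, only when $\alpha_k=1$. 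Combined with $\alpha_0+\cdots+\alpha_5=3$, this forces $\underline\alpha$ to be the exponent of $x_i^3$ (when $k=i$) or of $x_i^2x_k$ (when $k\neq i$). In other words, $\partial T/\partial x_k(P_i)$ equals, up to a nonzero scalar, the coefficient in $T$ of $x_i^2x_k$ (reading $x_i^2x_i=x_i^3$ in the case $k=i$).

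By the standing assumption no monomial $x_i^2x_{i'}$ lies in $\bar B$, i.e.\ none of them belongs to the eigenspace $\bigoplus_{\underline\alpha\in\Lambda_j}\C\cdot\underline x^{~\underline\alpha}$; hence all of these coefficients vanish, and therefore $\partial T/\partial x_k(P_i)=0$ for every $k$. By Euler's relation $3T=\sum_k x_k\,\partial T/\partial x_k$ this also gives $T(P_i)=0$, so $P_i\in X$, and the Jacobian criterion shows that $P_i$ is a singular point of $X=\{T=0\}$. As this holds for every $T\in\bar B$, the locus $B$ of smooth members is empty, contradicting the hypothesis; this proves the claim for each $i$.

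The argument is elementary once the right test point is chosen; the only real content is the observation in the second paragraph that the partial derivatives at the coordinate point $P_i$ are governed precisely by the monomials $x_i^2x_{i'}$, which is exactly the geometric reason the lemma holds. I would therefore regard identifying the test point $P_i$ and matching ``smoothness at $P_i$'' with ``some $x_i^2x_{i'}$ is present'' as the crux of the proof; beyond this bookkeeping I do not expect any genuine obstacle.
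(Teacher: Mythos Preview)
Your proof is correct and follows essentially the same approach as the paper: both argue by contradiction that if no monomial $x_i^2x_{i'}$ appears, then the coordinate point $P_i=[0:\cdots:1:\cdots:0]$ is a common singular point of every member of $\bar B$. The paper phrases this by writing any such $T$ as $x_iQ(x_0,\ldots,\widehat{x_i},\ldots,x_5)+C(x_0,\ldots,\widehat{x_i},\ldots,x_5)$ and noting the singularity is then obvious, whereas you verify it via explicit partial derivatives and Euler's relation; the content is the same.
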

\begin{proof}
 Suppose on the contrary that, without loss of generality, for $i=0$, none of the monomials $x_0^3$, $x_0^2x_1$, $x_0^2x_2$, $x_0^2x_3$, $x_0^2x_4$, $x_0^2x_5$ are contained in $\bar B$, then every equation in this family can be written in the following form:
 $$x_0Q(x_1, \cdots, x_5)+C(x_1, \cdots, x_5),$$
where $Q$ (\resp $C$) is a homogeneous polynomial of degree 2 (\resp 3). It is clear that $[1,0,0,0,0,0]$ is always a singular point, which is a contradiction.
\end{proof}

Since a finite-order automorphism amounts to the action of a finite
cyclic group, which is the product of some finite cyclic groups with
order equals to a power of a prime number, we only have to classify
automorphisms of \emph{primary} order, that is $n=p^m$ for $p$ a prime number and $m\in\N_+$. To get general results for any order from the classification of primary order case, see Remark \ref{rmk:final}. We thus
assume $n=p^m$ in the sequel.

For the convenience of the reader, we summarize all the relevant
equations:
\begin{equation}\label{eqn:forclassify}
\begin{cases}e_0+e_1+\cdots+e_5=2j \mod p^m;\\
\alpha_0+\cdots+\alpha_5=3;~~~ \alpha_i\in \N;\\
e_0\alpha_0+\cdots +e_5\alpha_5=j \mod p^m;\\
(*)~~\forall i, \exists i' \text{ such that } 2e_i+e_{i'}=j \mod p^m
\end{cases}
\end{equation}
where the last condition $(*)$ comes from Lemma \ref{lemma:2}.

We associate to each solution of $(\ref{eqn:forclassify})$ a
diagram, \ie a finite oriented graph, as follows:
\begin{itemize}
 \item[$(i)$] The vertex set is the quotient set of $\{0, \cdots, 5\}$ with respect to the equivalence relation defined by: $i_1\sim i_2$ if and only if $e_{i_1}=e_{i_2} \mod p^m$.
 \item[$(ii)$] For each pair $(i, i')$ satisfying $2e_i+e_{i'}=j \mod p^m$, there is an arrow from $i$ to $i'$.
\end{itemize}

Clearly the arrows in $(ii)$ are well-defined cause we have taken
into account of the equivalence relation in $(i)$. It is also
obvious that each vertex can have at most one arrow going out.
Thanks to the condition $(*)$ in (\ref{eqn:forclassify}), we know
that each vertex has exactly one arrow going out.

If $p\neq 2$, it is easy to see that each vertex has at most one
arrow coming in. Since the total going-out degree should coincide
with the total coming-in degree, each vertex has exactly one arrow
coming-in. As a result, the diagram is in fact a disjoint union of
several cycles\footnote{Here we use the terminology `cycle' in the sens of graph theory: it means a loop in a oriented graph with no arrow repeated. The \emph{length} of a cycle will refer to the number of arrows appearing in it.} in this case.

Before the detailed case-by-case analysis, let us point out that a
cycle in the diagram would have some congruence implications:
\begin{lemma}\label{lemma:cycle}
\begin{itemize}
  \item[$(i)$] There cannot be cycles of length 2.
  \item[$(ii)$] If $p\neq 3$, there are at most one cycle of length 1.
  \item[$(iii)$] If there is a cycle of length $l=$3, 4, 5 or 6, then
  $p$ divides $\frac{(-2)^l-1}{3}$.
\end{itemize}
\end{lemma}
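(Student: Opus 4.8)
The plan is to encode each cycle as the orbit of a single affine map and to read off a divisibility relation. Suppose we are given a cycle $i_0 \to i_1 \to \cdots \to i_{l-1} \to i_0$ of length $l$. Writing $a_k := e_{i_k} \bmod p^m$, the fact that the vertices are distinct equivalence classes means the $a_k$ are pairwise distinct in $\Z/p^m\Z$, and each arrow $i_k \to i_{k+1}$ records the congruence $2a_k + a_{k+1} \equiv j \pmod{p^m}$, i.e. $a_{k+1} = T(a_k)$ for the affine map $T(x) = -2x + j$. I would then iterate:
\[
T^{l}(x) = (-2)^{l} x + j\sum_{r=0}^{l-1}(-2)^{r} = (-2)^{l} x - N_l\, j, \qquad N_l := \frac{(-2)^l-1}{3},
\]
where $N_l$ is an integer because $-2 \equiv 1 \pmod 3$. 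Closing the cycle via $T^{l}(a_0) = a_0$ gives $((-2)^l - 1)a_0 = N_l\, j$; since $(-2)^l - 1 = 3N_l$, this rearranges to the master relation
\[
N_l\,(3a_0 - j) \equiv 0 \pmod{p^m},
\]
and the recurrence then propagates $3a_k \equiv 3a_0 \pmod{\cdot}$ as needed below.

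For part $(i)$ I would not even need this machinery: subtracting the two arrow equations $2a_0 + a_1 \equiv j$ and $2a_1 + a_0 \equiv j$ of a putative $2$-cycle yields $a_0 \equiv a_1 \pmod{p^m}$, contradicting the distinctness of the two vertices. For part $(ii)$, a length-$1$ cycle is a loop, whose defining equation is $3a_0 \equiv j \pmod{p^m}$; when $p \neq 3$ the element $3$ is invertible modulo $p^m$, so this pins down $a_0 = 3^{-1}j$ uniquely, and since distinct vertices carry distinct residues there can be at most one loop.

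For part $(iii)$ I would argue by contradiction, assuming $p \nmid N_l$. Then $N_l$ is a unit modulo $p^m$, so the master relation forces $3a_0 \equiv j \pmod{p^m}$; feeding this into $3a_{k+1} = -2(3a_k) + 3j$ shows by induction that $3a_k \equiv j$ for all $k$. The congruence $3x \equiv j \pmod{p^m}$ has at most $\gcd(3, p^m) \le 3$ solutions, so the $l$ distinct residues $a_0, \dots, a_{l-1}$ can number at most three; as $l \ge 3$ this forces $l = 3$ and moreover $p = 3$ (for $p \neq 3$ the congruence has a unique solution, giving $l \le 1$). But $l = 3$ yields $N_3 = \frac{-8-1}{3} = -3$, so $p = 3$ divides $N_3$ after all, contradicting $p \nmid N_l$. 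Hence $p \mid N_l = \frac{(-2)^l-1}{3}$.

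The step I expect to be the main obstacle, and where care is essential, is the case $p = 3$ in $(iii)$: since $3$ is a zero divisor modulo $3^m$ one must resist dividing the closing relation by $3$, and it is crucial that the master relation is the sharp $N_l(3a_0 - j) \equiv 0$, obtained by iterating the affine recurrence on the $a_k$ directly. Passing instead to the auxiliary variables $u_k = 3a_k - j$ (for which $u_{k+1} = -2u_k$, hence $((-2)^l-1)u_0 \equiv 0$, i.e. $3N_l u_0 \equiv 0$) loses a factor of $3$ exactly when $p = 3$ and is too weak to conclude. With the sharp relation in hand, the uniform bound $\gcd(3, p^m) \le 3$ on the number of solutions of $3x \equiv j$ closes the argument in all cases.
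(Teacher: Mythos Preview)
Your argument is correct and matches the paper's approach: iterate the recurrence $a_{k+1} = -2a_k + j$ to obtain the relation $N_l(3a_0 - j) \equiv 0 \pmod{p^m}$, then derive a contradiction when $N_l$ is a unit. The paper's endgame in $(iii)$ is shorter, though: once $3a_0 \equiv j$, substituting directly into $a_1 = -2a_0 + j$ gives $a_1 = a_0$, and inductively all $a_k$ coincide---this involves no division by $3$ and works uniformly for every prime $p$, so the solution-counting for $3x \equiv j$ and the separate treatment of $p=3$ that you flag as the main obstacle are unnecessary.
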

\begin{proof}
$(i)$ It is because $2e_i+e_{i'}=2e_{i'}+e_i \mod p^m$ will imply
$e_i=e_i' \mod p^m$, contradicts to the definition of a cycle.\\
$(ii)$ A cycle of length 1 means $3e_i=j \mod p^m$, and when $p\neq
3$, $e_i$ is determined by $j$.\\
$(iii)$ Without loss of generality, we can assume that the cycle is
given by:
$$2e_0+e_1=2e_1+e_2=\cdots=2e_{l-2}+e_{l-1}=2e_{l-1}+e_0=j \mod p^m.$$
This system of congruence equations implies that
\begin{equation}\label{eqn:cycle}
 \left((-2)^l-1\right)e_0=\frac{(-2)^l-1}{3}\cdot j \mod p^m.
\end{equation}
If $p$ does not divide $\frac{(-2)^l-1}{3}$, then by
(\ref{eqn:cycle}), we have $j=3e_0 \mod p^m$, and hence
$e_0=e_1=\cdots=e_{l-1}$, contradicting to the definition of a
cycle.
\end{proof}

Then we work out the classification case by case. The result is
summarized in Theorem \ref{thm:classification}.

\noindent\textbf{Case 0.} When $p\neq 2, 3, 5, 7 \text{ or } 11$.\\
If we have a cycle of length $l\geq 3$, since in Lemma \ref{lemma:cycle}, $\frac{(-2)^l-1}{3}$ could only be $ -3, 5, -11, 21$, all of which are prime to $p$, this will lead to a contradiction.
Therefore we only have cycles of length 1. As $p\neq 3$, $3^{-1}j \mod p^m$ is well-defined, hence we have $e_0=e_1=\cdots=e_5$. As a result, $f$ is the identity action of $\P^5$, which is Family 0 in Theorem \ref{thm:classification}. \\

\noindent\textbf{Case I.} When $p=11$.\\ As in the previous case, by
Lemma \ref{lemma:cycle}, cycles of length 2,3,4 or 6 cannot occur.
Thus the only possible lengths of cycles are 1 and 5. If there is no
cycle of length 5, then as before, since $p\neq 3$, all $e_i$'s will
be equal and $f$ will be the identity. Let the 5-cycle be
$$2e_0+e_1=2e_1+e_2=2e_2+e_3=2e_3+e_4=2e_4+e_0=j \mod 11^m.$$ As in (\ref{eqn:cycle}),
$$33e_0=11j \mod 11^m.$$ There thus exists $r\in \Z/11\Z$, such that $j=3e_0+r\cdot11^{m-1} \mod 11^m$, and
\begin{equation*}
 \begin{cases}
e_0=e_0;\\
e_1=e_0+r\cdot11^{m-1};\\
e_2=e_0-r\cdot11^{m-1};\\
e_3=e_0+3r\cdot11^{m-1};\\
e_4=e_0-5r\cdot11^{m-1};\\
e_5=e_0+4r\cdot11^{m-1};
 \end{cases}
\mod 11^m
\end{equation*}
where the last equality comes from the first equation in
(\ref{eqn:forclassify}). Clearly, $r$ cannot be 0, otherwise
$f=\id_{\P^5}$. One verifies easily that it is indeed a solution:
$3e_5=j$. As a result, remembering that we are in the projective
space $\P^5$,
$$f=\left(\begin{array}{cccccc}
1&&&&&\\
           &\zeta &&&&\\
&&\zeta^{-1}&&&\\
&&&\zeta^{3}&&\\
&&&&\zeta^{-5}&\\
&&&&&\zeta^{4}          \end{array} \right)$$ where
$\zeta=e^{\frac{r}{11}\cdot 2\pi\sqrt{-1}}$ and $1\leq r\leq 10$.
That is, $p=11, m=1, (e_0, \cdots, e_5)= (0,1,-1,3,-5,4)$. Going
back to (\ref{eqn:forclassify}), we easily work out all solutions
for $\alpha_i$'s:
\begin{eqnarray*}
 (\alpha_0, \cdots, \alpha_5)&=&(2,1,0,0,0,0), (0,2,1,0,0,0), (0,0,2,1,0,0), (0,0,0,2,1,0),\\&& (1,0,0,0,2,0), (0,0,0,0,0,3).
\end{eqnarray*}
Thus the corresponding family
$$\bar B=\P\left(\Span\langle x_0^2x_1, x_1^2x_2, x_2^2x_3, x_3^2x_4, x_4^2x_0, x_5^3 \rangle\right).$$
In order to verify the smoothness of a general member, it suffices
to give one smooth cubic fourfold in $\bar B$. For example,
$x_0^2x_1+x_1^2x_2+x_2^2x_3+x_3^2x_4+x_4^2x_0+x_5^3$ is smooth.
This is Family I in Theorem \ref{thm:classification}. We would like to mention that this example has been discovered in \cite{Mongardi}.\\

\noindent\textbf{Case II.} When $p=7$.\\ As before, by Lemma
\ref{lemma:cycle}, cycles of length 2,3,4 or 5 cannot occur. Thus
the only possible lengths of cycles are 1 and 6; and except the
trivial Family 0, there must be a 6-cycle:
$$2e_0+e_1=2e_1+e_2=2e_2+e_3=2e_3+e_4=2e_4+e_5=2e_5+e_0=j \mod 7^m.$$ As in (\ref{eqn:cycle}),
$$63e_0=21j \mod 7^m.$$ There thus exists $r\in \Z/7\Z$, such that $j=3e_0+r\cdot7^{m-1} \mod 7^m$, and
\begin{equation*}
 \begin{cases}
e_0=e_0;\\
e_1=e_0+r\cdot7^{m-1};\\
e_2=e_0-r\cdot7^{m-1};\\
e_3=e_0+3r\cdot7^{m-1};\\
e_4=e_0+2r\cdot7^{m-1};\\
e_5=e_0-3r\cdot7^{m-1};
 \end{cases}
\mod 7^m
\end{equation*}
Clearly, $r$ cannot be 0, otherwise $f=\id_{\P^5}$.
 One verifies easily that it is indeed a solution: $e_0+\cdots+e_5=2j$. As a result,
$$f=\left(\begin{array}{cccccc}
1&&&&&\\
           &\zeta &&&&\\
&&\zeta^{-1}&&&\\
&&&\zeta^{3}&&\\
&&&&\zeta^{2}&\\
&&&&&\zeta^{-3}          \end{array} \right)$$ where
$\zeta=e^{\frac{r}{7}\cdot 2\pi\sqrt{-1}}$ and $1\leq r\leq 6$. That
is, $p=7, m=1, (e_0, \cdots, e_5)= (0,1,-1,3,2,-3)$. Going back to
(\ref{eqn:forclassify}), we easily work out all solutions for
$\alpha_i$'s:
\begin{eqnarray*}
 (\alpha_0, \cdots, \alpha_5)&=&(2,1,0,0,0,0), (0,2,1,0,0,0), (0,0,2,1,0,0), (0,0,0,2,1,0),\\& & (0,0,0,0,2,1), (1,0,0,0,0,2), (1,0,1,0,1,0), (0,1,0,1,0,1).
\end{eqnarray*}
Thus the corresponding family
$$\bar B=\P\left(\Span\langle x_0^2x_1, x_1^2x_2, x_2^2x_3, x_3^2x_4, x_4^2x_5, x_5^2x_0, x_0x_2x_4, x_1x_3x_5
\rangle\right).$$ As before, to show that a general member of this
family is smooth, we only need to remark that $x_0^2x_1+
x_1^2x_2+x_2^2x_3+x_3^2x_4+x_4^2x_5+x_5^2x_0$ is smooth.
This accomplishes Family II in Theorem \ref{thm:classification}.\\

\noindent\textbf{Case III.} When $p=5$.\\ As before, by Lemma
\ref{lemma:cycle}, cycles of length 2,3,5 or 6 cannot occur. Thus
the only possible lengths of cycles are 1 and 4; and except the
trivial Family 0, there must be a 4-cycle:
$$2e_0+e_1=2e_1+e_2=2e_2+e_3=2e_3+e_0=j \mod 5^m.$$ As before, by (\ref{eqn:cycle}) we get
$$15e_0=5j \mod 5^m.$$ There thus exists $r\in \Z/5\Z$, such that $j=3e_0+r\cdot5^{m-1} \mod 5^m$, and
\begin{equation*}
 \begin{cases}
e_0=e_0;\\
e_1=e_0+r\cdot5^{m-1};\\
e_2=e_0-r\cdot5^{m-1};\\
e_3=e_0-2r\cdot5^{m-1};\\
 \end{cases}
\mod 5^m
\end{equation*}
Since $e_0\neq e_1$, $r$ cannot be 0. Since 2-cycle does not exists,
for $i=4, 5$, either $e_i$ takes the same value as $e_0,\cdots,
e_3$, or it is a 1-cycle, \ie $3e_i=j$. In any case, we can write
\begin{equation*}
 \begin{cases}
e_4=e_0+ar\cdot5^{m-1};\\
e_5=e_0+br\cdot5^{m-1},\\
 \end{cases}
\mod 5^m
\end{equation*}
where $a, b\in\Z/5\Z$. Taking into account of the first equation in
(\ref{eqn:forclassify}), we obtain
$$a+b=4 \mod 5.$$
As a result,
$$f=\left(\begin{array}{cccccc}
1&&&&&\\
           &\zeta &&&&\\
&&\zeta^{-1}&&&\\
&&&\zeta^{-2}&&\\
&&&&\zeta^{a}&\\
&&&&&\zeta^{4-a}          \end{array} \right)$$ where
$\zeta=e^{\frac{r}{5}\cdot 2\pi\sqrt{-1}}$ for $1\leq r\leq 4$ and
$a\in \Z/5\Z$.

That is, $p=5, m=1, (e_0, \cdots, e_5)= (0,1,-1,-2,a,4-a)$. Going
back to (\ref{eqn:forclassify}), we work out the solutions for
$\alpha_i$'s depending on the value of $a$:

\noindent\textbf{Subcase III (i).} When $a=0$.\\
$p=5, m=1, (e_0, \cdots, e_5)= (0,1,-1,-2,0,-1)$, and
$$f=\left(\begin{array}{cccccc}
1&&&&&\\
           &\zeta &&&&\\
&&\zeta^{-1}&&&\\
&&&\zeta^{-2}&&\\
&&&&1&\\
&&&&&\zeta^{-1}          \end{array} \right)$$ where
$\zeta=e^{\frac{r}{5}\cdot 2\pi\sqrt{-1}}$ for $1\leq r\leq 4$.
Solving $\alpha_i$'s from the equation (\ref{eqn:forclassify}):
\begin{eqnarray*}
 (\alpha_0, \cdots, \alpha_5)&=&(2,1,0,0,0,0), (0,2,1,0,0,0), (0,0,2,1,0,0), (1,0,0,2,0,0),\\& & (0,1,0,0,2,0), (0,0,0,2,1,0), (0,2,0,0,0,1), (0,0,0,1,0,2),\\ & & (1,1,0,0,1,0), (0,0,1,1,0,1).
\end{eqnarray*}
Thus the corresponding family
$$\bar B=\P\left(\Span\langle x_0^2x_1, x_1^2x_2, x_2^2x_3, x_3^2x_0, x_4^2x_1, x_3^2x_4, x_1^2x_5, x_5^2x_3, x_0x_1x_4, x_2x_3x_5 \rangle\right).$$
However, there is no smooth cubic fourfolds in this family: in fact
each member would have two singular points in the line
$(x_0=x_1=x_3=x_4=0)$.

\noindent\textbf{Subcase III (ii).} When $a=1$.\\
$p=5, m=1, (e_0, \cdots, e_5)= (0,1,-1,-2,1,-2)$, and
$\zeta=e^{\frac{r}{5}\cdot 2\pi\sqrt{-1}}$ for $1\leq r\leq 4$
$$f=\left(\begin{array}{cccccc}
1&&&&&\\
           &\zeta &&&&\\
&&\zeta^{-1}&&&\\
&&&\zeta^{-2}&&\\
&&&&\zeta&\\
&&&&&\zeta^{-2}          \end{array} \right)$$ By the transformation
$\zeta\mapsto\zeta^3$ (which amounts to let $r\mapsto 3r$), this $f$
is exactly the one in Subcase III(i).

\noindent\textbf{Subcase III (iii).} When $a=2$.\\
$p=5, m=1, (e_0, \cdots, e_5)= (0,1,-1,-2,2,2)$, and
$$f=\left(\begin{array}{cccccc}
1&&&&&\\
           &\zeta &&&&\\
&&\zeta^{-1}&&&\\
&&&\zeta^{-2}&&\\
&&&&\zeta^2&\\
&&&&&\zeta^{2}          \end{array} \right)$$ where
$\zeta=e^{\frac{r}{5}\cdot 2\pi\sqrt{-1}}$ for $1\leq r\leq 4$.
Solving $\alpha_i$'s from the equation (\ref{eqn:forclassify}):
\begin{eqnarray*}
 (\alpha_0, \cdots, \alpha_5)&=&(2,1,0,0,0,0), (0,2,1,0,0,0), (0,0,2,1,0,0), (1,0,0,2,0,0),\\& & (0,0,0,0,2,1), (0,0,0,0,1,2), (0,0,0,0,0,3), (0,0,0,0,3,0),\\& & (1,0,1,0,1,0), (1,0,1,0,0,1),  (0,1,0,1,1,0), (0,1,0,1,0,1).
\end{eqnarray*}
Thus the corresponding family
$$\bar B=\P\left(\Span\langle x_0^2x_1, x_1^2x_2, x_2^2x_3, x_3^2x_0, x_4^2x_5, x_5^2x_4, x_5^3, x_4^3, x_0x_2x_4, x_0x_2x_5,x_1x_3x_4, x_1x_3x_5
\rangle\right).$$ Moreover, a general cubic fourfold in this family
is smooth. Indeed, we give a particular smooth member:
$x_0^2x_1+x_1^2x_2+x_2^2x_3+x_3^2x_0+x_4^3+x_5^3$. This is Family
III in Theorem \ref{thm:classification}.

\noindent\textbf{Subcase III (iv).} When $a=3$.\\ By the symmetry
between $a$ and $b$, it is the same case as Subcase III(ii), hence
as Subcase III(i).

\noindent\textbf{Subcase III (v).} When $a=4$.\\ By the symmetry between $a$ and $b$, it is the same case as Subcase III(i).\\

\noindent\textbf{Case IV.} When $p=3$.\\ Still by
Lemma \ref{lemma:cycle}, we know that cycles of length 2,4 or 5
cannot occur. Thus the only possible lengths of cycles are 1, 3 and
6. We first claim that 6-cycle cannot exist. Suppose on the contrary
that the diagram is a 6-cycle:
$$2e_0+e_1=2e_1+e_2=2e_2+e_3=2e_3+e_4=2e_4+e_5=2e_5+e_0=j \mod 3^m,$$ then we have as in (\ref{eqn:cycle}) that
$63e_0=21j \mod 3^m.$ There thus exists $r\in \Z/3\Z$, such that
$j=3e_0+r\cdot3^{m-1} \mod 3^m$, and
\begin{equation*}
 \begin{cases}
e_0=e_0;\\
e_1=e_0+r\cdot3^{m-1};\\
e_2=e_0-r\cdot3^{m-1};\\
e_3=e_0;\\
e_4=e_0+r\cdot3^{m-1};\\
e_5=e_0-r\cdot3^{m-1}.\\
 \end{cases}
\mod 3^m
\end{equation*}
This contradicts to the assumption that $e_i$'s are distinct.
Therefore, there are only 1-cycles and 3-cycles. A 1-cycle means
$3e_i=j \mod 3^m$. On the other hand, a 3-cycle
$2e_0+e_1=2e_1+e_2=2e_2+e_0=j \mod 3^m$ would imply $9e_0=3j$. In
particular, $9e_0=9e_1=\cdots=9e_5=3j \mod 3^m$. Without loss of
generality, we can demand $e_0=0$. As a result, $f$ has the form
$f=\diag(1, \zeta^{a_1},\cdots, \zeta^{a_5})$ where
$\zeta=e^{\frac{2\pi\sqrt{-1}}{9}}$. In particular, $f$ is of order
3 or 9.

\noindent\textbf{Subcases IV (i).} If $f$ is of order 3. \\
Let $\omega:=e^{\frac{2\pi\sqrt{-1}}{3}}$. Then up to isomorphism,
$f$ is one of the following automorphisms:
\begin{itemize}
 \item $\diag(1,1,1,1,1,\omega)$: this case does not satisfy condition $(*)$.
 \item $\diag(1,1,1,1,1,\omega^2)$: this case does not satisfy condition $(*)$.
 \item $\diag(1,1,1,1,\omega,\omega^2)$: we find Family IV-(1) in Theorem
\ref{thm:classification}. We remark that its general member is
indeed smooth because in particular the Fermat cubic fourfold (which
is smooth) is contained in this family.
 \item $\diag(1,1,1,1,\omega,\omega)$: this case does not satisfy condition $(*)$.
 \item $\diag(1,1,1,1,\omega^2,\omega^2)$: this case does not satisfy condition $(*)$.
 \item $\diag(1,1,1,\omega,\omega,\omega^2)$: Here we find $\bar B$
 has a basis:
 $$x_5\cdot \text{ degree 2 monomials on } x_0, x_1 \text{ and } x_2;
 x_4x_5^2, x_3x_5^2, x_0x_3x_4, x_1x_3x_4, x_2x_3x_4, x_0x_3^2,
 x_1x_3^2, x_2x_3^2, x_0x_4^2, x_1x_4^2, x_2x_4^2,$$ However, any
 cubic fourfold in this family is singular along a conic curve in the
 projective plane $(x_3=x_4=x_5=0)$.
 \item $\diag(1,1,1,\omega^2,\omega^2,\omega)$: this is as in the previous case, with $\omega$ be replaced by $\omega^2$.
 \item $\diag(1,1,1,\omega,\omega,\omega)$:  By solving (\ref{eqn:forclassify}), we
 find a the following basis for $\bar B$:
 $$\text{degree 3 monomials on }x_0, x_1 \text{ and }x_2; \text{degree 3 monomials on }x_3, x_4 \text{ and
 }x_5.$$ As the Fermat cubic fourfold is in this family, the general
 member is also smooth. This is Family IV-(2) in Theorem \ref{thm:classification}.
 \item $\diag(1,1,\omega,\omega,\omega^2,\omega^2)$: The basis of $\bar B$ is
 $$\text{degree 3 monomials on }x_0\text{ and }x_1;\text{degree 3 monomials on }x_2\text{ and }x_3;\text{degree 3 monomials on }x_4\text{ and }x_5;$$
 $$x_0x_2x_4, x_0x_2x_5, x_0x_3x_4, x_0x_3x_5, x_1x_2x_4, x_1x_2x_5, x_1x_3x_4,
 x_1x_3x_5.$$ Because $\bar B$ contains the Fermat cubic fourfold,
 its general member is smooth. This is Family IV-(3) in Theorem \ref{thm:classification}.
\end{itemize}

\noindent\textbf{Subcase IV (ii).} If the diagram consists of two
3-cycles (thus $e_i$'s are distinct, so $m\geq 2$):
\begin{equation*}
 \begin{cases}
  2e_0+e_1=2e_1+e_2=2e_2+e_0=j\\
  2e_3+e_4=2e_4+e_5=2e_5+e_3=j\\
 \end{cases}
\mod 3^m,
\end{equation*}
From which we have $3j=9e_0=9e_3\mod 3^m$. Hence there exists $t=\pm
1$ such that $j=3e_0+t\cdot3^{m-1}$, and
\begin{equation*}
 \begin{cases}
e_0=e_0;\\
e_1=e_0+t\cdot3^{m-1};\\
e_2=e_0-t\cdot3^{m-1};\\
e_3=e_0+r\cdot3^{m-2};\\
e_4=e_0+t\cdot3^{m-1}-2r\cdot3^{m-2};\\
e_5=e_0-t\cdot3^{m-1}+4r\cdot3^{m-2}.\\
 \end{cases}
\mod 3^m
\end{equation*}
where  $r\in \Z/9\Z$. Note that $r\neq 0,3,6 \mod 9$, since
otherwise $e_i$'s cannot be distinct. By the first equation in
(\ref{eqn:forclassify}), $$t=-r \mod 3.$$ Putting this back into the
previous system of equations, we obtain:

$p=3, m=2, n=9, (e_0, \cdots, e_5)= (0,-3,3,1,4,-2), j=-3 \mod 9$,
and $$f=\left(\begin{array}{cccccc}
1&&&&&\\
           &\zeta^{-3} &&&&\\
&&\zeta^{3}&&&\\
&&&\zeta&&\\
&&&&\zeta^4&\\
&&&&&\zeta^{-2}          \end{array} \right)$$ where
$\zeta=e^{\frac{r}{9}\cdot 2\pi\sqrt{-1}}$ for
$r\in\{1,2,4,5,7,8\}$. Solving $\alpha_i$'s from the equation
(\ref{eqn:forclassify}):
\begin{eqnarray*}
 (\alpha_0, \cdots, \alpha_5)&=&(2,1,0,0,0,0), (0,2,1,0,0,0), (1,0,2,0,0,0), (0,0,0,2,1,0),\\& & (0,0,0,0,2,1), (0,0,0,1,0,2).
\end{eqnarray*}
Thus the corresponding family
$$\bar B=\P\left(\Span\langle x_0^2x_1, x_1^2x_2, x_2^2x_0, x_3^2x_4, x_4^2x_5, x_5^2x_3\rangle\right).$$
Clearly, the cubic
$x_0^2x_1+x_1^2x_2+x_2^2x_0+x_3^2x_4+x_4^2x_5+x_5^2x_3$ is smooth,
hence so is the general cubic fourfold in this family. This is
Family IV-(4) in Theorem \ref{thm:classification}.

\noindent\textbf{Subcase IV (iii).} If the diagram contains only one
3-cycle: $$2e_0+e_1=2e_1+e_2=2e_2+e_0=j \mod 9.$$ As before, we can
assume $e_0=0$, then $e_1=j$, $e_2=-j$ and $3j=0 \mod 9$. In
particular, $3|j$. Since $j\neq 0 \mod 9$ (otherwise $e_0=e_1=e_2$
is a contradiction), $j=\pm3$. For $i=3,4,5$, $e_i$ either takes value in $\{e_0, e_1, e_2\}$, or $3e_i=j$.\\
If $j=3$, then $f$ has the form
$$f=\diag(1, \zeta^3, \zeta^{-3}, \zeta^a, \zeta^b, \zeta^c),$$
where $a,b,c\in\{0,3,6,1,4,7\}$. By the first equation in
(\ref{eqn:forclassify}), $$a+b+c=6 \mod 9.$$ Thus either $a,b,c\in
\{0,3,6\}$, or $a,b,c\in \{1,4,7\}$. While the former will make $f$
of order 3, which has been treated in Subcases IV(i). Therefore
$a,b,c\in \{1,4,7\}$ and $a+b+c=6$. There are only three
possibilities (up to permutations of $a,b,c$): $(a,b,c)=(1,1,4)$ or
$(4,4,7)$ or $(7,7,1)$. However these three corresponds to the
following same automorphism
$$f=\left(\begin{array}{cccccc}
1&&&&&\\
           &\zeta^{3} &&&&\\
&&\zeta^{-3}&&&\\
&&&\zeta&&\\
&&&&\zeta&\\
&&&&&\zeta^{4}          \end{array} \right)$$ Back to
(\ref{eqn:forclassify}), we solve the corresponding $\alpha_i$'s to
get the following basis for $\bar B$:
$$\bar B=\P\left(\Span\langle x_0^2x_1, x_1^2x_2, x_2^2x_0, x_3^2x_4, x_4^2x_3, x_3^3, x_4^3,
x_5^3\rangle\right).$$ As we have a smooth member $x_0^2x_1+
x_1^2x_2+x_2^2x_0+x_3^3+x_4^3+x_5^3$ is this family, the general one
is also smooth. This is Family IV-(5) in Theorem \ref{thm:classification}.\\
If $j=-3$, it reduces to the $j=3$ case by replace $\zeta$ by
$\zeta^{-1}$, thus already included in Family IV-(5) of the theorem.

\noindent\textbf{Subcase IV (iv).} If the diagram has only 1-cycles,
\ie for any $0\leq i\leq 5$,
$$3e_i=j \mod 9.$$ In particular, $3|j$. First of all, $j\neq 0$,
otherwise, $f$ is of order 3, which is treated in Subcases IV(i).\\
If $j=3$. Then $e_i\in \{1,4,7\}$ for any $i$. Taking into account
the first equation of (\ref{eqn:forclassify}), we find all the
solutions for $(e_0,\cdots,e_5)$, up to permutations:
\begin{eqnarray*}
  (e_0,\cdots,e_5)&=& (1,1,1,4,4,4), (1,1,1,7,7,7), (4,4,4,7,7,7)\\
  &&(1,1,1,1,4,7), (4,4,4,4,1,7), (7,7,7,7,1,4)\\
  &&(1,1,4,4,7,7)
\end{eqnarray*}
where the automorphisms in the first line is equal to
$\diag(1,1,1,\omega,\omega,\omega)$, which has been done in Family
IV-(2); the automorphisms in the second line is equal to
$\diag(1,1,1,1,\omega,\omega^2)$, which has been done in Family
IV-(1); the last automorphism is equal to
$\diag(1,1,\omega,\omega,\omega^2,\omega^2)$, which has been done in
Family IV-(3) in Theorem \ref{thm:classification}.\\

\noindent\textbf{Case V.} When $p=2$.\\
By Lemma \ref{lemma:cycle}, we find that the associated
diagram has only 1-cycles. The new phenomenon is that the coming-in
degree in this case is not necessarily 1. Firstly, we claim that the
order of $f$ divides 32. Indeed, for any 1-cycle, say, $3e_0=j \mod
2^m$, then $e_0$ is in fact well-determined $\mod 2^m$. Without loss
of generality, we can assume that all the 1-cycles are 0, \ie $j=0$.
As a result, a vertex pointing to a 1-cycle is divisible by
$2^{m-1}$, and a vertex pointing to a vertex pointing to a 1-cycle
is divisible by $2^{m-2}$, \etc. As there is no cycle of length
$\geq 2$, every vertex, after at most 5 steps, arrives at some
1-cycle vertex. Therefore, every vertex is divisible by $2^{m-5}$,
hence we can reduce everything modulo 32: namely $n=32$ and $e_i\in
\Z/32\Z$.

\begin{figure}[h]
\begin{center}
 \includegraphics[scale=3, width=13 cm]{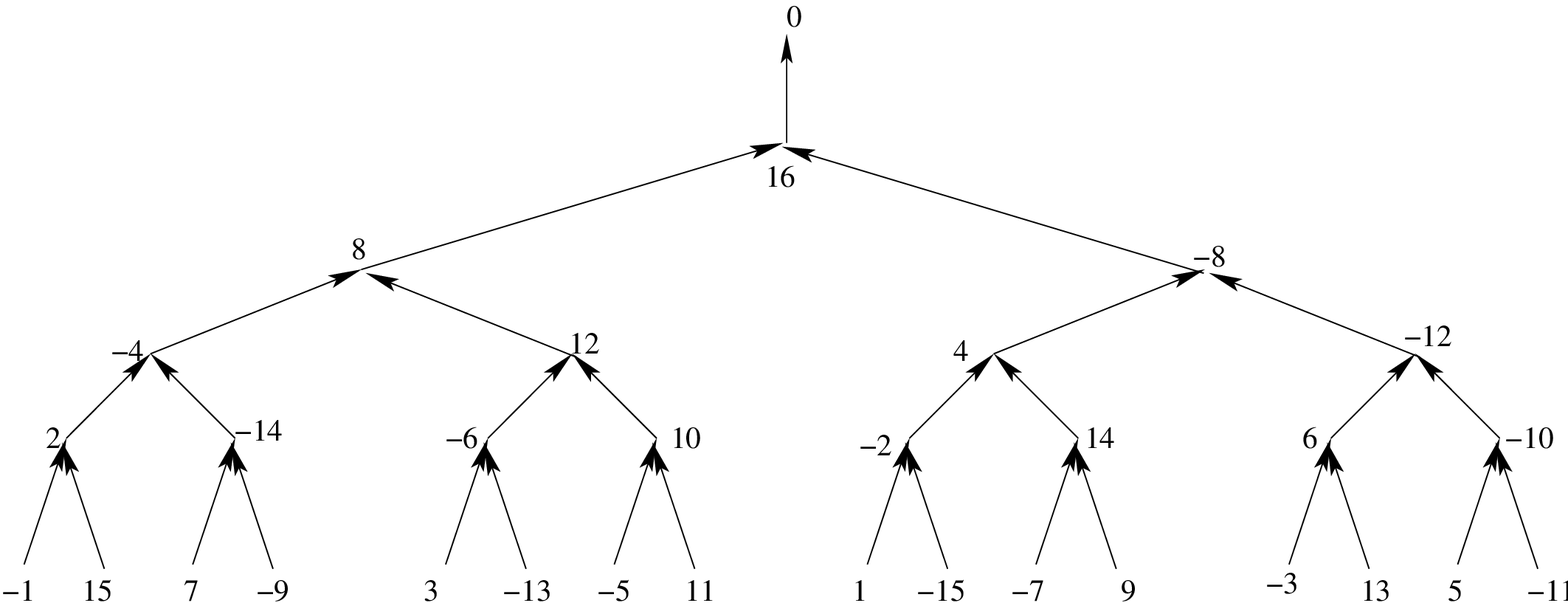}
\end{center}
\end{figure}

Let us put $\Z/32\Z$ into the above complete binary tree, then
clearly, our associated diagram is a sub-diagram of this tree,
satisfying two properties:
\begin{itemize}
 \item If a vertex belongs to the diagram then so do its ancestors;
 \item The sum of vertices (multiplicities counted) is zero modulo 32.
\end{itemize}

It is immediate that the leaves (vertices on the bottom sixth level)
cannot appear in the diagram: since by the parity of their sum, if
there are leaves, there are at least two. But we already have five
ancestors to include, while we have only six places in total. Next,
we remark that the vertices in the fifth level cannot belong to the
diagram neither: since the sum is divisible by 4, there are at least
two vertices from the fifth level if there is any, and they should
have the same father (otherwise we need to include at least five
ancestors, and it will be out of place). Therefore we only have four
possibilities, and it is straightforward to check that none of them
has sum zero as demanded.

As a result, we have a further reduction: since only the first four
levels can appear, the order of $f$ always divides 8. We can assume
now $n=8$ and $e_i\in \Z/8\Z$. Similarly, we put $\Z/8\Z$ into the
following complete binary tree:
\begin{figure}[h]
\begin{center}
 \includegraphics[scale=2, width=5 cm]{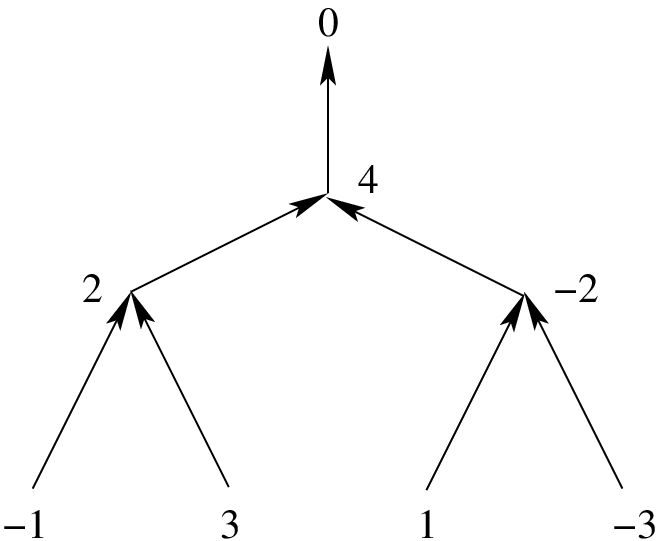}
\end{center}
\end{figure}

Then our diagram is a sub-diagram of this tree which is as before
`ancestor-closed' and of multiplicities counted sum zero modulo 8.
We easily work out all the possibilities as follows. It is worthy to
point out that when $p=2$, for a given automorphism $f$, there may
be two possible values of $j$, which would correspond to two
different families of cubic fourfolds.
\begin{itemize}
 \item $(e_0,\cdots, e_5)=(0,0,0,0,4,4)$ or $(0,0,4,4,4,4)$. In this case,
  $f$ is the involution $\diag(1,1,1,1,-1,-1)$ and we reduce to $n=p=2$ with $(e_0,\cdots,
  e_5)=(0,0,0,0,1,1)$. It splits into two cases depending on the parity of $j$:\\
  If $j$ is even, then the equation for $\alpha_i$'s becomes $\alpha_4+\alpha_5=0 \mod
  2$. This is Family V-(1) in Theorem \ref{thm:classification}, whose
  generic smoothness is easy to verify: $x_4^2x_0+x_5^2x_1+x_4x_5x_2+x_0^3+x_1^3+x_2^3+x_3^3$ is smooth. This family of
  cubic fourfolds has been studied in \cite{MR2967237}.\\
  If $j$ is odd, then a basis for $\bar B$ is given by
  $$x_5\cdot\text{degree 2 monomials on} x_0,\cdots,x_3; x_4\cdot\text{degree 2 monomials on} x_0,\cdots,x_3;x_4^3, x_5^3, x_4^2x_5, x_5^2x_4.$$
   However, any cubic fourfold
  in this family is singular along the intersection of two quadrics
  in the projective 3-planes $(x_4=x_5=0)$.
 \item $(e_0,\cdots, e_5)=(0,0,0,4,2,2)$ or $(0,4,4,4,-2,-2)$.
 They both correspond to the following automorphism of order $n=4$:
 $$f=\diag(1,1,1,-1,i,i).$$ We thus reduce to $n=4$, and $(e_0,\cdots,
 e_5)=(0,0,0,2,1,1)$. Therefore $2j=0 \mod 4$, and thus $j=0, 2\mod
 4$. It also splits into two different cases depending on the value of
 $j$.\\
 If $j=0\mod 4$, the equation (\ref{eqn:forclassify}) for $\alpha_i$'s becomes
 $2\alpha_3+\alpha_4+\alpha_5=0 \mod 4$. We easily obtain a basis
 for $\bar B$:
 $$\text{degree 3 monomials on }x_0,x_1, x_2; x_0x_3^2, x_1x_3^2, x_2x_3^2, x_3x_4x_5, x_3x_4^2,
 x_3x_5^2.$$ Unfortunately, any cubic fourfold in this family is
 singular on two points on the line defined by
 $(x_0=x_1=x_2=x_3=0)$.\\
 If $j=2\mod 4$, the equation is $2\alpha_3+\alpha_4+\alpha_5=2 \mod
 4$. The following consists of a basis for $\bar B$:
 $$x_3\cdot\text{degree 2 monomials on }x_0,x_1, x_2; x_3^3, x_0x_4^2,x_1x_4^2, x_2x_4^2, x_0x_5^2,x_1x_5^2, x_2x_5^2, x_0x_4x_5,
 x_1x_4x_5,x_2x_4x_5.$$ The general member is singular along a conic
 in the projective plane $(x_3=x_4=x_5=0)$.

 \item $(e_0,\cdots, e_5)=(0,0,0,4,-2,-2)$ or $(0,4,4,4,2,2)$. They both correspond to the same $f$, which becomes the automorphism of the previous case if we replace $i$ by $-i$.
 \item $(e_0,\cdots, e_5)=(0,4,2,2,2,-2)$ or $(0,4,2,-2,-2,-2)$. They both correspond to the following automorphism of order $n=4$:
 $$f=\diag(1,1,1,-1,\sqrt{-1},-\sqrt{-1}).$$ We thus reduce to $n=4$
 and $(e_0,\cdots, e_5)=(0,0,0,2,1,-1)$. Hence $2j=2\mod 4$ gives
 two cases $j=1,-1\mod 4$.\\
 If $j=1 \mod 4$, the equation for $\alpha_i$'s becomes $2\alpha_3+\alpha_4-\alpha_5=1 \mod
 4$. The basis for $\bar B$ is:
 $$x_4\cdot\text{degree 2 monomials on }x_0,x_1,x_2; x_3^2x_4, x_0x_3x_5, x_1x_3x_5,x_2x_3x_5,x_5^3,
 x_4^2x_5.$$ Each cubic fourfold in this family is singular along a
 conic curve in the plane defined by $(x_3=x_4=x_5=0)$.\\
 If $j=-1 \mod 4$, the equation for $\alpha_i$'s becomes $2\alpha_3+\alpha_4-\alpha_5=-1 \mod
 4$, all the solutions are exactly the ones when $j=1$ with $x_4$
 and $x_5$ interchanged.

 \item $(e_0,\cdots, e_5)=(0,0,4,4,2,-2)$. It is the following automorphism of order $n=4$:
 $$f=\diag(1,1,-1,-1,\sqrt{-1},-\sqrt{-1}).$$ We then reduce to $n=4$ and $(e_0,\cdots,
 e_5)=(0,0,2,2,1,-1)$. Therefore $2j=0\mod 4$, hence we have two
 cases.\\
 If $j=0\mod 4$, the equation for $\alpha_i$'s is $2\alpha_2+2\alpha_3+\alpha_4-\alpha_5=0\mod
 4$.It corresponds Family V-(2)(a) in Theorem
 \ref{thm:classification}. It is easy to find a smooth member, for
 example, $x_0^3+x_1^3+x_2^2x_0+x_3^2x_1+x_4^2x_2+x_5^2x_3$.\\
 If $j=2\mod 4$, the equation for $\alpha_i$'s is $2\alpha_2+2\alpha_3+\alpha_4-\alpha_5=2\mod
 4$, we have Family V-(2)(b) in Theorem
 \ref{thm:classification}. We give an example of smooth cubic
 fourfold in this family:
 $x_2^3+x_3^3+x_2x_0^2+x_3x_1^2+x_0x_4^2+x_1x_5^2$.

 \item $(e_0,\cdots, e_5)=(0,4,4,2,-1,-1)$ or $(0,4,4,2,3,3)$ or $(0,4,4,-2,1,1)$ or $(0,4,4,-2,-3,-3)$.
 Although they are \emph{different} automorphisms of order $n=8$,
 they correspond to four possible choices of the primitive eighth root of unity $\zeta$ in the automorphism:
 $$f=\diag(1,-1,-1,\zeta^{-2},\zeta,\zeta),$$
 where $\zeta=e^{\frac{r}{8}\cdot 2\pi\sqrt{-1}}$ for $r=\pm1,\pm3$.
 For each choice of $r$, one of the two possible values of $j$ does not satisfies the last condition $(*)$ in (\ref{eqn:forclassify}).
 The remaining one gives the equation $$4\alpha_1+4\alpha_2-2\alpha_3+\alpha_4+\alpha_5=0 \mod
 8.$$ The solutions form a basis for $\bar B$:
 $$\bar B=\P\left(\Span\langle x_0^3, x_0x_1^2,x_0x_2^2,x_0x_1x_2,
 x_2x_3^2,x_1x_3^2,x_3x_4^2,x_3x_5^2,x_3x_4x_5\rangle\right).$$
 However any cubic fourfold in this family is singular on two points
 on the line defined by $(x_0=x_1=x_2=x_3=0)$.

\item $(e_0,\cdots, e_5)=(0,0,4,2,-1,3)$ or $(0,0,4,-2,1,-3)$.
They are \emph{different} automorphisms of order $n=8$. In fact the
four possible choices of the primitive eighth root of unity $\zeta$
collapse into two cases. The automorphism:
 $$f=\diag(1,1,-1,\zeta^{2},\zeta^{-1},\zeta^3),$$
 where $\zeta=e^{\frac{r}{8}\cdot 2\pi\sqrt{-1}}$ for $r=\pm1$ (here $r=\pm3$ will give the same two automorphisms).
 In this case, one of the two possible values of $j$ does not satisfies the last condition $(*)$ in (\ref{eqn:forclassify}).
 The remaining one corresponds to the equation $$4\alpha_2+2\alpha_3-\alpha_4+3\alpha_5=0\mod
 8.$$ We easily resolve it to obtain $$\bar B=\P\left(\Span\langle \text{degree 3 monomials on }x_0 \text{ and }x_1, x_0x_2^2,x_1x_2^2,x_2x_3^2, x_3x_4^2,
 x_3x_5^2\rangle\right).$$ But each member in this family is
 singular at least on two points of the line defined by $(x_0=x_1=x_2=x_3=0)$.

\item $(e_0,\cdots, e_5)=(0,4,2,-2,1,3)$ or $(0,4,2,-2,-1,-3)$.
As in the previous case, although they are \emph{different}
automorphisms of order $n=8$, each corresponds to two possible
choices of the primitive eighth root of unity $\zeta$. The
automorphism is $$f=\diag(1,-1,\zeta^{2},\zeta^{-2},\zeta,
\zeta^3),$$ where $\zeta=e^{\frac{r}{8}\cdot 2\pi\sqrt{-1}}$ for
$r=\pm1$ (here $r=\pm3$ will give the same two automorphisms).  In
this case, one of the two possible values of $j$ does not satisfies
the last condition $(*)$ in (\ref{eqn:forclassify}). The remaining
one is Family V-(3) in Theorem \ref{thm:classification}, where the
smoothness of the general member is affirmed by the example:
$x_0^3+x_0x_1^2+x_1x_2^2+x_1x_3^2+x_0x_2x_3+x_3x_4^2+x_2x_5^2$.
Since the verification of the smoothness of this example is a little
bit involved, we give the details here. Let
$T=x_0^3+x_0x_1^2+x_1x_2^2+x_1x_3^2+x_0x_2x_3+x_3x_4^2+x_2x_5^2$. To
find the singular locus of $(T=0)$, we need to solve the system of
equations $$\frac{\partial T}{\partial x_0}=\frac{\partial
T}{\partial x_1}=\cdots=\frac{\partial T}{\partial x_5}=0,$$ that
is:
\begin{equation*}
  \begin{cases}
    3x_0^2+x_1^2+x_2x_3=0;\\
    2x_0x_1+x_2^2+x_3^2=0;\\
    2x_1x_2+x_0x_3+x_5^2=0;\\
    2x_3x_1+x_0x_2+x_4^2=0;\\
    x_3x_4=0;\\
    x_2x_5=0.
  \end{cases}
\end{equation*}
Thanks to the last two equations, we have four cases $(x_2=x_3=0),
(x_2=x_4=0),(x_5=x_3=0), (x_5=x_4=0)$. In the first three cases, it
is easy to deduce that every variable is zero. In the last case, the
system of equations simplifies to:
\begin{equation*}
  \begin{cases}
    3x_0^2+x_1^2+x_2x_3=0;\\
    2x_0x_1+x_2^2+x_3^2=0;\\
    2x_1x_2+x_0x_3=0;\\
    2x_3x_1+x_0x_2=0.
  \end{cases}
\end{equation*}
It is still easy to deduce that every variable is non-zero. And then
from the last two equations, we find $x_0=\pm 2x_1$ and $x_2=\mp
x_3$. Putting these into the first two equations, we get
contradictions. As a consequence, $(T=0)$ is smooth.
\end{itemize}

The classification is complete and the result is summarized in
Theorem \ref{thm:classification}.

\begin{rmks}\label{rmk:final}
We have some explanations to make concerning the usage of our list.
\begin{itemize}
 \item In the fifth column of the table in Theorem \ref{thm:classification},
 we give a basis for the \emph{compactified} parameter space $\bar B$,
 which contains of course singular members. To pick out the smooth ones (\ie to determine the \emph{non-empty} open dense subset $B$),
 we have to apply usual method of Jacobian criterion.
 \item Strictly speaking, the moduli space of cubic fourfolds is the geometric quotient $$M:=\P\left(H^0(\P^5,\sO(3))\right)//\PGL_6,$$
 and each $\bar B$ we have given in the theorem is a sub-projective space of $\P\left(H^0(\P^5,\sO(3))\right)$, whose image in $M$ is (a component of) the `moduli space' of cubic fourfolds admitting a `symplectic' automorphism of certain primary order.
 \item For an automorphism $f$ of a given order $n$, say
 $n=2^{r_2}3^{r_3}5^{r_5}7^{r_7}11^{r_{11}}$, where $r_2=$ 0, 1, 2 or 3; $r_3=$ 0,1 or 2 and $r_5,r_7,r_{11}=$ 0 or
 1. Then $f=f_2f_3f_5f_7f_{11}$ where $f_p$ is an automorphisms of order $p^{r_p}
 $ commuting with each other. Thus they can be diagonalised
 simultaneously. Therefore to classify automorphisms of a given
 order, it suffices to  intersect the corresponding families $\bar B$'s in the list, after \emph{independent} scaling and permutation of coordinates, inside the complete linear system $\P\left(H^0(\P^5,\sO(3))\right)$.
 Of course it may end up with an empty family or a family consisting of only singular members.
\end{itemize}
\end{rmks}

\begin{expl}
  We investigate the example of Fermat cubic fourfold
  $X=(x_0^3+x_1^3+x_2^3+x_3^3+x_4^3+x_5^3=0)$. We know that (\cf \cite{MR966448},
  \cite{MR1894739}) its automorphism group is $\Aut(X)=\left(\Z/3\Z\right)^5\rtimes
  \mathfrak{S}_6$, which is generated by multiplications by third
  roots of unity on coordinates and permutations of coordinates. Using Griffiths' residue description of Hodge
  structure as in the proof of Lemma \ref{lemma:symp}, we find that
  $$\Aut^{pol, symp}(F(X))=\left\{f\in\Aut(X)~|~f^*|_{H^{3,1}(X)}=\id\right\}=\left(\Z/3\Z\right)^4\rtimes
  \mathfrak{A}_6,$$ where each element has the form:
  $$[x_0, x_1, x_2, x_3, x_4, x_5]\mapsto
  [x_{\sigma(0)},\omega^{i_1}x_{\sigma(1)},\cdots,\omega^{i_5}x_{\sigma(5)}],$$
  where $\omega=e^{\frac{2\pi\sqrt{-1}}{3}}$, $i_1,\cdots, i_5=$ 0, 1 or 2 with sum $i_1+\cdots+i_5$
  divisible by 3, and $\sigma\in \mathfrak{A}_6$ is a permutation of
  $\{0,1,\cdots,5\}$ with even sign.

    Then $X$ is
  \begin{itemize}
    \item not in Family I, II, IV-(4), IV-(5) or V-(3) simply because
    $X$ does not admit automorphisms of order 11, 7, 9 or 8;
    \item in Family III, since $[x_0, x_1, x_2, x_3, x_4, x_5]\mapsto [x_1, x_2, x_3, x_4,x_0,
    x_5]$ is an order 5 automorphism which induces a symplectic automorphism on its Fano variety of lines. The eigenvalues of the
    corresponding permutation matrix is $1,1,\zeta,
    \zeta^2,\zeta^3,\zeta^4$, thus it is exactly the automorphism in
    the list (up to a linear automorphism of $\P^5$).
    \item in IV-(1), IV-(2), IV-(3) obviously;
    \item in V-(1), V-(2)(a) and V-(2)(b), because $[x_0, x_1, x_2, x_3, x_4, x_5]\mapsto [x_1, x_2,
    x_3,x_0, x_5,x_4]$ is an order 4 automorphism which induces a
    symplectic automorphism of its Fano variety. The eigenvalues are
    $1,1,-1,-1,\sqrt{-1},-\sqrt{-1}$, therefore the automorphism is
    the one given in V-(2) (up to a linear automorphism of $\P^5$).
  \end{itemize}

\end{expl}

\section{Fixed loci}\label{sect:fixed}
We calculate the fixed loci of a generic member for each example in the list of Theorem
\ref{thm:classification}. Firstly, we make several general remarks
concerning the fixed loci:
\begin{itemize}
  \item For a smooth variety,  the fixed locus of any automorphism of finite
  order is a (not necessarily connected) smooth subvariety. For a
  proof, \cf\cite[Lemma 4.1]{MR0263834}.
  \item If furthermore the variety is symplectic and the finite-order automorphism
  preserves the symplectic form, then the components of the fixed
  locus are \emph{symplectic} subvarieties. Indeed, for a given fixed point, the automorphism
  acts also on the tangent space at this fixed point, preserving the symplectic form,
  where the tangent space of the component of the fixed locus
  passing through this point is exactly the fixed subspace.
  However, since the fixed subspace is orthogonal to the other
  eigenspaces with respect to the symplectic form, it must be a
  symplectic subspace. In consequence, the fixed locus is a (smooth)
  symplectic subvariety.
\end{itemize}
Therefore, in the case of this paper, the fixed loci must be
disjoint unions of (isolated) points, K3 surfaces and abelian
surfaces, and we will see that all three types do occur in the
list in Theorem \ref{thm:classification}.

We now turn to the calculation of the fixed loci of the examples in
our classification. For a cubic fourfold $X$ with an action $f$, we
denote $\hat f$ the induced action on $F(X)$. Then the fixed points
of $\hat f$ in $F(X)$ are the lines contained in $X$ which are
preserved by $f$. Since any automorphism of $\P^1$ admits two (not
necessarily distinct) fixed points, it suffices to check for each
line joining two fixed points of $f$ in $X$ whether it is contained
in $X$. In the following, we choose some typical examples in our
list to give the argument in detail, while the complete result is
presented in the last column of the table in Theorem
\ref{thm:classification}.

Denote $P_0:=[1,0,0,0,0,0]$, $P_1:=[0,1,0,0,0,0],\cdots,$
$P_5:=[0,0,0,0,0,1]$. We have explicit description of the fixed
loci:

For \textbf{Family I}, a cubic fourfold in this family has equation
of the following form
$$a_0x_0^2x_1+a_1x_1^2x_2+a_2x_2^2x_3+a_3x_3^2x_4+a_4x_4^2x_0+a_5x_5^3.$$
The fixed points of $f$ in $X$ are $P_0, P_1, P_2, P_3, P_4$. We
check the 10 possible lines joining two of them and find that only
the following 5 are contained in $X$: $\bar{P_0P_2}$,
$\bar{P_0P_3}$, $\bar{P_1P_3}$, $\bar{P_1P_4}$, $\bar{P_2P_4}$,
where $\bar{PQ}$ means the line joining two points $P$ and $Q$.
Similarly, the same argument applies in the following
families and gives:\\
\textbf{Family II}: the fixed points in $F(X)$ are given by the following nine lines: $\bar{P_0P_2}$, $\bar{P_0P_3}$, $\bar{P_0P_4}$, $\bar{P_1P_3}$, $\bar{P_1P_4}$, $\bar{P_1P_5}$, $\bar{P_2P_4}$, $\bar{P_2P_5}$, $\bar{P_3P_5}$.\\
\textbf{Family IV-(4)}: the fixed points of $F(X)$ correspond to the
following nine lines: $\bar{P_0P_3}$, $\bar{P_0P_4}$,
$\bar{P_0P_5}$, $\bar{P_1P_3}$, $\bar{P_1P_4}$, $\bar{P_1P_5}$,
$\bar{P_2P_3}$, $\bar{P_2P_4}$, $\bar{P_2P_5}$.\\
\textbf{Family V-(3)}: the fixed points are given by the six lines:
$\bar{P_1P_4}$, $\bar{P_1P_5}$, $\bar{P_2P_3}$, $\bar{P_2P_4}$,
$\bar{P_3P_5}$, $\bar{P_4P_5}$.

For \textbf{Family III}, the equation has the following form
$$C(x_4, x_5)+R(x_0,\cdots, x_5),$$ where $C$ is a homogeneous
polynomial of degree 3, and $R$ is a polynomial with the degrees of
$x_4$ and $x_5$ at most 1. The fixed points of $f$ in $X$ are $P_0,
P_1, P_2, P_3$ and the line $\bar{P_4P_5}$. On one hand, among the
six possible lines joining $P_0, P_1, P_2, P_3$, only $\bar{P_0P_2}$
and $\bar{P_1P_3}$ are contained in $X$; on the other hand, for
$0\leq i\leq 4$, the line $\bar{P_i,[0,0,0,0,\lambda,\mu]}$ is
contained in $X$ if and only if $[\lambda, \mu]$ satisfies the cubic
equation $C$, and therefore we have three for each $i$. Altogether,
the fixed locus in $F(X)$ consists of $2+4\times3=14$ lines. Similar
arguments gives the results of the following:\\
\textbf{Family IV-(3)}: the equation has the form
$C_1(x_0,x_1)+C_2(x_2,x_3)+C_3(x_4,x_5)+R$, where $C_i$ are of
degree 3 while each term of $R$ is square-free. Then the fixed locus
of in $F(X)$ corresponds to the 27 lines $\bar{Q_{ik}Q_{jl}}$ for
$0\leq i<j\leq 3$ and $k,l=1,2,3$, where $Q_{i1}, Q_{i2}, Q_{i3}$
are the three points satisfying the equation $C_i$.\\
\textbf{Family IV-(5)}: the equation writes $C(x_3,
x_4)+a_0x_0^2x_1+a_1x_1^2x_2+a_2x_2^2x_0+a_5x_5^3$, where $C$ is of
degree 3. Let $Q_1, Q_2, Q_3$ be the three points on the line
$\bar{P_3P_4}$ satisfying $C$. Then the fixed locus in $F(X)$
correspond to the 9 lines: $\bar{P_iQ_j}$ for $i=0,1,2$ and
$j=1,2,3$.

For \textbf{Family IV-(1)}, let defining equation of the cubic
fourfold be $$C(x_0, \cdots, x_3)+ R$$ where $C$ is of degree 3 and
each term of $R$ contains $x_4$ or $x_5$. Clearly, the fixed locus
of $f$ in $X$ is $\P^3=(x_4=x_5=0)$. A line in this $\P^3$ is
contained in $X$ if and only if it satisfies $C$, namely, it is
contained in the cubic surface defined by $C$. It is well-known that
there are 27 such lines.

For \textbf{Family IV-(2)}, let cubic fourfold be defined by
$C_1(x_0, x_1, x_2)+C_2(x_3, x_4, x_5)$, where $C_1, C_2$ are of
degree 3. The fixed locus in $X$ is two disjoint planes:
$W_1=(x_0=x_1=x_2=0)$ and $W_2=(x_3=x_4=x_5=0)$. On one hand, inside
each plane it is impossible to have a line contained in $X$. On the
other hand, a line joining a point $Q_1\in W_1$ and a point $Q_2\in
W_2$ is contained is $X$ if and only if $Q_i$ satisfies the equation
$C_i$ for $i=1,2$, \ie $Q_i$ is in the elliptic curve $E_i$ defined
by $C_i$. Thus such lines are parameterized by $E_1\times E_2$,
which is an abelian surface.

\textbf{Family V-(1)} is done also in \cite{MR2967237}, but we
reproduce the argument for the sake of completeness. The equation
has the following form:
$$C(x_0,\cdots, x_3)+x_4^2L_1+x_5^2L_2+x_4x_5L_3,$$ where $C$ is of
degree 3, and $L_1, L_2, L_3$ are linear forms in $x_0,\cdots,x_3$.
The fixed points of $f$ in $X$ is clearly the disjoint union of
$\P^3=(x_4=x_5=0)$ and the line $\bar{P_4P_5}$. First of all, the
line $\bar{P_4P_5}$ is contained in $X$, giving a isolated fixed
point in $F(X)$; secondly a line of this $\P^3$ is contained in $X$
if and only if it satisfies $C$, \ie it is contained in the cubic
surface defined by $C$, and we thus obtain another 27 isolated fixed
points in $F(X)$; finally, the condition that a line joining a point
$Q_1\in \P^3$ and another point $Q_2\in \bar{P_4P_5}$ is contained
in $X$ is given by a double cover of the cubic surface $(C=0)$
ramified along the degree 6 curve $(C=L_3^2-L_1L_2=0)$, which is a
K3 surface. Altogether, the fixed locus of $\hat f$ in $F(X)$ is 28
points with a K3 surface.

For \textbf{Family V-(2)(a)}, the fixed point set of $f$ consists of
the disjoint union of $\bar{P_0P_1}$, $\bar{P_2P_3}$ and $P_4$,
$P_5$. The line $\bar{P_4P_5}$ is contained in $X$; there are three
points $Q_1, Q_2, Q_3\in \bar{P_0P_1}$ such that $\bar{Q_iP_j}$ is
contained in $X$, for $i=1,2,3$ and $j=4,5$; there are two points
$Q_4, Q_5\in \bar{P_2P_3}$ such that $\bar{Q_4P_4}$ and
$\bar{Q_5P_5}$ are contained in $X$; finally for each $Q_i\in
\bar{P_0P_1}$, $1\leq i\leq 3$, there exists two points on
$\bar{P_2P_3}$ such that the joining line is contained in $X$. Thus
$\hat f$ has altogether $1+3\times2+2+3\times2=15$ isolated fixed
points. Similarly, for \textbf{Family V-(2)(b)}, the fixed locus in
$F(X)$ also consists of 15 isolated points.

\bibliographystyle{amsplain}
\bibliography{biblio_fulie}

\end{document}